\documentclass[a4paper,12pt,reqno]{amsart}
\author{Julia Brandes}
\title[linear spaces on hypersurfaces]{On the number of linear spaces on hypersurfaces with a prescribed discriminant}
\address{Mathematical Sciences, Chalmers Institute of Technology and University of Gothenburg, 412 96 G{\"o}teborg, Sweden}
\email{brjulia@chalmers.se}
\subjclass[2010]{Primary: 11D72. Secondary: 11E76, 11P55.}
\keywords{Forms in many variables, linear spaces}

\usepackage[T1]{fontenc}
\usepackage[latin1]{inputenc}
\usepackage{latexsym}
\usepackage[arrow, matrix, curve]{xy}
\usepackage[dvips]{graphicx}
\usepackage{epsfig}
\usepackage{bm}
\usepackage{amssymb}
\usepackage{amsmath}
\usepackage{verbatim}
\usepackage{amsthm}
\usepackage{enumerate}
\usepackage{mathrsfs}
\usepackage[hmargin=3cm,vmargin=3cm]{geometry}
\usepackage{mathtools}

\newtheorem{thm}{Theorem}

\newtheorem{lem}{Lemma}

\relpenalty=9999
\binoppenalty=9999

\numberwithin{equation}{section}
\numberwithin{thm}{section}
\numberwithin{lem}{section}

\theoremstyle{definition}

\def\Z{\mathbb Z}
\def\Q{\mathbb Q}
\def\R{\mathbb R}

\def\T{\mathbb T}

\def\B#1{\mathbf{#1}}
\def\ba{\bm{\alpha}}
\def\bb{\bm{\beta}}

\def\F#1{\mathfrak{#1}}
\def\cal#1{\mathcal{#1}}
\def\sc#1{\mathscr{#1}}
\def\D{\;\mathrm{d}}

\def\ol#1{\overline{\B{#1}}}

\def\U#1{\underline{\mathbf{#1}}}
\def\UU#1{\underline{\bm{#1}}}

\def\Ua{\underline{\bm{\alpha}}}
\def\UA{\underline{\mathbf{a}}}

\def\Ub{\underline{\bm{\beta}}}
\def\Ug{\underline{\bm{\gamma}}}
\def\a0{\alpha_0}

\def\eps{\varepsilon}
\def\tq{\tilde{q}}

\DeclareMathOperator{\lcm}{lcm}

\DeclareMathOperator{\card}{Card}

\DeclareMathOperator{\vol}{vol}
\DeclareMathOperator{\sing}{Sing}

\renewcommand\le{\leqslant}
\renewcommand\ge{\geqslant}

\hyphenation{Diet-mann}
\hyphenation{di-men-sio-nal}


\begin{document}

\begin{abstract}
  For a given form $F\in \Z[x_1,\dots,x_s]$ we apply the circle method in order to give an asymptotic estimate of the number of $m$-tuples $\B x_1, \dots, \B x_m$ spanning a linear space on the hypersurface $F(\B x) = 0$ with the property that $\det ( (\B x_1, \dots, \B x_m)^t \, (\B x_1, \dots, \B x_m)) = b$. This allows us in some measure to count rational linear spaces on hypersurfaces whose underlying integer lattice is primitive.
\end{abstract}

\maketitle

\section{Introduction}

Let $F \in \Z[x_1,\dots,x_s]$ be a form of degree $d$. In previous work \cite{FRF}, \cite{FRF2} we investigated the number of rational linear spaces of dimension $m$ contained in the hypersurface given by $F(\B x)=0$. Let $N_m(P)$ denote the number of $\B x_1, \dots, \B x_m \in \Z^s$, $|x_{ij}| \le P \; (1 \le i \le m, 1 \le j \le s)$, satisfying
\begin{align}\label{1.1}
    F(\B x_1 t_1 + \dots + \B x_m t_m)=0 \quad \text{identically in $t_1, \dots, t_m$},
\end{align}
and set $r = \binom{m+d-1}{d}$. Theorem~1.1 of \cite{FRF} and Theorem~1.3 of \cite{FRF2} show that there exists a positive parameter $\delta$ such that
\begin{align}\label{1.3}
    N_m(P) = P^{ms-rd} \chi_{\infty} \prod_{p \text{ prime}} \chi_p + O(P^{ms-rd-\delta})
\end{align}
for some non-negative constants $\chi_{\infty}$ and $\chi_p$ characterising the density of solutions over the local fields $\R$ and $\Q_p$, respectively, provided that
\begin{align*}
    s - \dim \sing F > 2^d r (d-1).
\end{align*}
It is, however, apparent that the strategy of counting linear spaces by analysing Equation \eqref{1.1} is susceptible to double-counting in several ways. Rational linear spaces can be viewed as integer lattices, but in order to count lattices via counting sets of generators, we need tools both to identify primitive lattices and to account for the multiplicity factor by which each individual lattice is counted. The objective of this memoir is to make a step in this direction by counting integer lattices contained in hypersurfaces that have a fixed lattice discriminant. If $L' \subseteq L$ is a sublattice, then the discriminant of $L$ will divide the discriminant of $L'$. It follows that the $m$-dimensional sublattices of $\Z^s$ counted by $N_m(P)$ with prime discriminant will be primitive unless they are oriented along coordinate axes, in which case they are primitive if and only if they are unimodular.

Suppose that $(\B x_1, \dots, \B x_m)$ is a solution to \eqref{1.1}, then we may consider the lattice $X$ spanned by $\B x_1, \dots, \B x_m$. This is a sublattice of $\Z^s$ of dimension at most $m$ and with lattice discriminant $\cal D(X) = \sqrt{\det((\B x_1, \dots, \B x_m) ^t \,(\B x_1, \dots, \B x_m))}$. Write $D(\B x_1, \dots, \B x_m) = \cal D(X)^2$, so that $D$ is a homogeneous polynomial of degree $2m$ in the $ms$ variables $(\B x_1, \dots, \B x_m) \in \Z^{ms}$, and denote by $N_m(P;b)$ the number of $(\B x_1, \dots, \B x_m) \in \Z^{ms}$ counted by $N_m(P)$ that additionally satisfy the equation
\begin{align}\label{1.4}
    D(\B x_1, \dots, \B x_m) &= b.
\end{align}
The main result of this paper is an asymptotic estimate for $N_m(P;b)$.
\begin{thm}\label{mainthm}
    Let $F \in \Z[x_1, \dots, x_s]$ be a smooth form of degree $d \ge 2$, and let $m \geqslant 2$ be a positive integer. Furthermore, suppose that
    \begin{align*}
        s > \max\left\{ 2^{d-1}(6m+3r+2)(d-1) + 2^{2m}m, 2^{d-1}(d-1)(3r+2+2m(r+3)/d)\right\}
    \end{align*}
    if $d>2m$, and
    \begin{align*}
        s> 2^{d-1}rd +2^{2m-1}(2+rd)(2m-1)
    \end{align*}
    if $d<2m$.
    Then there exists a $\delta>0$ such that
    \begin{align}\label{1.5}
        N_m(P;b) = P^{ms-rd-2m}\chi_{\infty}(b,P) \prod_{p \text{ prime}} \chi_p(b) + O(P^{ms-rd-2m-\delta})
    \end{align}
    with non-negative factors $\chi_{\infty}(b,P)$ and $\chi_p(b)$ that characterise the density of solutions of the system over the local fields $\R$ and $\Q_p$, respectively.
\end{thm}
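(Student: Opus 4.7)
The strategy is to apply the Hardy--Littlewood circle method to the enlarged system of $r+1$ equations consisting of the $r$ degree-$d$ forms $F_1,\ldots,F_r$ obtained by expanding \eqref{1.1} in the variables $t_j$, together with the degree-$2m$ equation $D(\B x_1,\ldots,\B x_m)=b$. Writing
\[
S(\ba,\beta)=\sum_{|x_{ij}|\le P} e\!\left(\sum_{i=1}^r \alpha_i F_i(\B x)+\beta D(\B x)\right),
\]
one has $N_m(P;b)=\int_0^1\!\int_{[0,1)^r} S(\ba,\beta)\,e(-\beta b)\D\ba\D\beta$, and one performs a Hardy--Littlewood dissection of $[0,1)^{r+1}$ into major arcs $\F M$ and minor arcs $\F m$.

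The major arc analysis follows the template of \cite{FRF, FRF2}, with an extra integration over $\beta$ along its own major arcs. The standard factorisation of local densities produces the singular integral $\chi_\infty(b,P)$ and the singular series $\prod_p \chi_p(b)$, and the additional degree-$2m$ equation accounts for the refinement $P^{-2m}$ in the scale of the main term.

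The minor arcs $\F m$ form the technical heart of the argument. The natural approach is to decompose $\F m = \F m_1 \cup \F m_2$ according to whether $\beta$ admits a good rational approximation (in which case $\ba$ must lie outside the $F$-major arcs) or not. On $\F m_1$ I would apply the Weyl-type estimates for the $F$-system from \cite{FRF, FRF2}, which are uniform in $\beta$, in conjunction with a pruning argument in $\beta$ that provides the required factor $P^{-2m-\delta}$. On $\F m_2$ one needs a Weyl-type bound directly for the phase $\beta D(\B x)$: iterating Weyl differencing $2m-1$ times on the Gram determinant $D=\det(M^tM)$, where $M$ is the $s\times m$ matrix with columns $\B x_i$, reduces matters to counting solutions of a multilinear system whose associated variety of degenerate configurations has codimension proportional to $s$, yielding savings of shape $P^{\gamma s-\eps}$ with $\gamma$ of order $2^{-(2m-1)}$.

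The dichotomy $d>2m$ versus $d<2m$ in the hypothesis reflects the relative size of the two Weyl exponents $2^{d-1}$ and $2^{2m-1}$. When $d>2m$ the differencing of $F$ is the more expensive operation, and the two stated conditions optimise the contributions of $\F m_1$ (where the $F$ minor-arc bound must be inflated by a power of $P$ to accommodate the pruning against $D$) and of $\F m_2$ respectively; when $d<2m$ the differencing of $D$ dominates, and only the analogue of the second bound is required. I expect the main obstacle to be the treatment of $\F m_2$: verifying that iterated differencing of the Gram determinant yields an auxiliary variety of the expected codimension, and ensuring that the resulting Weyl bound can be coupled with the pruning against the $F$-major arcs without loss of uniformity in $\ba$.
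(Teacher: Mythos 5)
Your high-level architecture --- a single exponential sum over $\T^{r+1}$, a sequential two-stage Weyl dissection in $\beta$ and $\ba$, a dichotomy $d>2m$ versus $d<2m$ governed by the relative sizes of $2^{d-1}$ and $2^{2m-1}$, and the identification of the codimension of the auxiliary variety for the differenced Gram determinant as the principal geometric obstacle --- agrees with the paper's. Two issues, one substantial, deserve attention.

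The substantial gap concerns your claim that on $\F m_1$ one may ``apply the Weyl-type estimates for the $F$-system from \cite{FRF,FRF2}, which are uniform in $\beta$.'' For $d>2m$ that is literally true, since $d-1$ rounds of differencing annihilate the degree-$2m$ polynomial $D$; but in that regime the paper in fact approximates $\ba$ \emph{first}, not $\beta$. When $2m>d$, which is the regime matching your stated order of decomposition, differencing $\sum_{\B j}\alpha_{\B j}\Phi_{\B j}+\beta D$ some $d-1$ times leaves a $\beta D$-remnant of positive degree in $\ol x$, and the Weyl bound of \cite{FRF} simply does not apply to the resulting sum. The paper's fix, following the Browning--Heath-Brown technique, is to carry out the last differencing as a van der Corput step with step size $q_0$, the denominator of the already-obtained rational approximation to $\beta$; the derivative of the $D$-contribution to the differenced phase is then $\ll\|q_0\alpha_0\|P^{2m-1-\nu}\ll P^{-1}$, and a multi-dimensional summation by parts shows it is negligible (this is the content of equation (2.9) and the display following it). The symmetric device, with step size $\tq$ equal to the denominator of the $\ba$-approximation, is needed in the second stage of the $d>2m$ treatment to kill the $F$-terms when one approximates $\alpha_0$. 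Without this mechanism the coupling between the two forms cannot be severed in either case. In particular, the order of approximation must be reversed between the two cases: one always approximates the higher-degree form first, so that the lower-degree form vanishes under pure differencing, and only then handles the lower-degree form using the van der Corput step.

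The geometric obstacle you flag is genuinely delicate, and it is worth recording why: the natural complex embedding of $D(\ol x)$ involves the conjugate transpose $X^*$ and is therefore not a polynomial map, so Birch's algebraic-geometric dimension argument does not transfer as stated. The paper's Lemma~2.3 works instead over $\R$: it embeds the solution set of $d_n(\B h_1,\dots,\B h_{2m-1})=0$ into the bilinear variety $\det(\ol x^t\,\ol y)=0$ in $\R^{2ms}$ and slices iteratively along the diagonals $\B x_k=\B y_k$, showing that each slice drops the dimension by exactly $s$, which delivers the codimension bound needed to exclude case (C) of the Weyl lemma.
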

More general versions of each of the cases of Theorem~\ref{mainthm} are available below (see Theorems \ref{T4.1} and \ref{T7.1}) that somewhat relax the requirement that $F$ should be smooth. The glaring omission here is of course the case $d=2m$; while the analytical aspects of the treatment of this case are in fact more conventional than in the situation when $d \neq 2m$ and largely follow the arguments of \cite{birch} and \cite{FRF}, the geometry creates additional difficulties when the singularities of $F$ interfere with the discriminant equation. We plan to resolve this issue in future work.

We will prove Theorem~\ref{mainthm} by the circle method via a combination of the ideas presented in our own work \cite{FRF} and recent work by Browning and Heath-Brown \cite{bhb}. Observe that the latter can be applied directly to the simultaneous equations \eqref{1.1} and \eqref{1.4}, yielding conditions of the shape $s>s_0(d,m)$, where $s_0(d,m) \ll 2^d r^2 d$ for $d >m$ and $s_0(d,m) \ll 2^{2m}r^2m$ when $2m>d$. In particular, these bounds grow quadratically in the parameter $r$, which is itself of size $m^d$. In contrast, Theorem~\ref{mainthm} exhibits linear growth in $r$.

It is an obvious question whether or not, and in what circumstances, the local factors in Theorem~\ref{mainthm} are positive, so that the formula in \eqref{1.5} provides an honest asymptotic estimate. Ideally, one might be tempted to speculate that there might be a way of relating each factor $\chi_p(b)$ in \eqref{1.5} with the corresponding factor $\chi_p$ in \eqref{1.3}, but unfortunately it seems highly unlikely for such a relation to hold in general. Nonetheless, we are able to say something about the factors. An argument by Schmidt \cite[\S 2]{cubIV} shows that $\chi_{\infty}(b,P)>0$, provided that that the variety described by \eqref{1.1} and \eqref{1.4} has a positive $(ms-r-1)$-dimensional volume inside $[-1,1]^{ms}$. Similarly, one can show by arguments due to Schmidt (see \cite[Lemma 2 and \S 11]{schmidtquad} and \cite[\S 2]{cubIV}) in combination with a bound of Wooley on $p$-adic solubility \cite[Corollary 1.1]{tdw-local} that $\chi_p(b)$ is positive whenever
\begin{align*}
    s\ge\max\{&2^{d-1}(d-1)(r+1)((r+1)d^2)^{2^{d-1}}, \\
    &2^{2m-1}(2m-1)(r+1)((r+1)(2m)^2)^{2^{2m-1}}\},
\end{align*}
though we note that for generic forms $F$ this bound may be improved somewhat by earlier work of the author (see \cite[Corollary 1]{Hensel}).

In the context of Theorem~1.1, the choice of $b=0$ is somewhat distinguished, as it corresponds to the number of choices for $\B x_1, \dots, \B x_m$ satisfying \eqref{1.1} that have a linear independence relation between them. Such solutions span linear spaces of dimension at most $m-1$, and thus represent the `degenerate' solutions to \eqref{1.1}. As might be expected, results controlling the number of such degenerate solutions can be obtained by much simpler means. Lewis and Schulze-Pillot \cite[p. 283]{lsp} addressed the issue perfunctorily by showing that the set of linear spaces of dimension at most $m/2$ has itself dimension smaller than $ms-rd$ whenever $s>m/2 +2rd/m$, which is sufficient for applications requiring only an asymptotic dependence on $m$ (see e.g. Lewis and Schulze-Pillot \cite[Theorems 1 and 3]{lsp}, Dietmann \cite[Theorem~2]{rd08}, and Brandes \cite[Theorem~1.3]{FRF}). However, even a precise statement can easily be established by observing that the main term of $N_m(P)$ exceeds that of $N_{m-1}(P)$ as soon as $s>\binom{d+m-2}{d-1} d$, a much weaker condition than what is required in Theorem~ \ref{mainthm}. Nonetheless, one could ask even in this setting how Theorem~\ref{mainthm} compares with other analytic methods in showing that $N_m(P;0) = o(N_m(P))$. The conclusion of Theorem~\ref{mainthm} is stronger than necessary in that it saves an additional amount of $P^{2m}$ over what is needed for a non-trivial result. Marmon \cite{mar} recently showed that non-trivial upper bounds can be established even when the number of variables is smaller than what is required for an asymptotic formula. However, in order to save the required amount, his methods still yield bounds on the number of variables that grow quadratically in $r$, though one can potentially improve on this by optimising his treatment for situations involving linear spaces.

This question can be interpreted in somewhat different terms in the context of counting matrices with a fixed determinant. Whilst the determinant is in many ways the most natural measure of the size of a matrix, its hyperbolic nature renders it unsuitable as a height function. Hence in settings that require a finite-volume height function one typically resorts to height functions that increase with the size of the coefficients, and this raises the question of whether the two can be related. Duke, Rudnick and Sarnak \cite[example 1.6]{DRS} provide a count of matrices of bounded euclidean height with a given non-zero determinant. Our Theorem \ref{mainthm} can be viewed as a generalisation of their result in the sense that we count matrices whose constituting columns lie not in the affine space but on a hypersurface. Obviously, such a question can sensibly be asked only if the dimension of the linear space is smaller than the dimension of the embedding variety, and in practice we require the much more stringent condition that the variety contain a sufficiently high-dimensional linear space on which to perform such a count.\\

As a result of the methods applied, the proof of Theorem~\ref{mainthm} naturally consists of two essentially self-contained parts, as the cases $d>2m$ and $d<2m$ need separate treatment. We will consider the situation when $d<2m$ in \S\S 2--4, and turn to the structurally similar but technically slightly more demanding case $d>2m$ in \S\S 5--7.

Throughout the paper, the following conventions will be observed. Every statement involving the letter $\eps$ is true for all $\eps>0$, and consequently no effort will be made to trace the `value' of $\eps$. Statements involving vectors should be read entry-wise, so $|\B x| \le P$ means $|x_i| \le P$ for all components $x_i$ of $\B x$. Similarly, we write $(\B a, b)$ for the greatest common divisor of all entries $a_i$ and $b$. Expressions like $\sum_{n=1}^x f(n)$, where $x$ may or may not be an integer, are always to be interpreted as $\sum_{1 \leqslant n \leqslant x} f(n)$. We will commonly write $\T = \R/\Z$. Finally, we write $e(x) = e^{2 \pi i x}$ and use the Landau and Vinogradov notation extensively. All implied constants are allowed to depend on $s$, $d$, $m$ and the coefficients of $F$, but are independent of $P$, which is always taken to be a large integer.\\\hspace*{\fill}

\textbf{Acknowledgements.} The author is very grateful to Oscar Marmon for reading an earlier draft version of this article and for making available a preprint version of \cite{mar}, and to Tim Browning for valuable comments.

\section{The case $2m>d$: Weyl's inequality}

Let $\Phi$ be the symmetric $d$-linear form associated to $F$, and write $J$ for the set of $d$-tuples $\B j \in \{1,\dots,m\}^d$ neglecting order, so that $\card J = r$. Then we have
\begin{align*}
    F(\B x_1 t_1 + \dots + \B x_m t_m)=\sum_{\B j \in J}A(\B j) t_{j_1} \cdot \ldots \cdot t_{j_d} \Phi(\B x_{j_1}, \dots, \B x_{j_d})
\end{align*}
with suitable combinatorial factors $A(\B j) \in \Z/d!$. Writing $\ol x = (\B x_1, \dots, \B x_m)$ and $\Phi_{\B j}(\ol x) = A(\B j)\Phi(\B x_{j_1}, \dots, \B x_{j_d})$, we see that \eqref{1.1} holds for precisely those $\ol x$ that satisfy
\begin{align*}
    \Phi_{\B j}(\ol x) = 0 \qquad (\B j \in J).
\end{align*}
Let $ \ba=(\alpha_{\B j})_{\B j \in J}$ and $\Ua = (\ba, \alpha_0) \in \T^r \times \T$, and write
\begin{align*}
  \F F(\ol x; \ba)  = \sum_{\B j \in J} \alpha_{\B j} \Phi_{\B j}(\ol x) \quad \text{and} \quad \F F_0(\ol x; \Ua) = \F F(\ol x; \ba) + \alpha_0 D(\ol x),
\end{align*}
then we may define the exponential sum as
\begin{align*}
    T_P(\Ua)=\sum_{|\ol x| \leqslant P} e(\F F_0(\ol x; \Ua)).
\end{align*}
Recalling the standard orthogonality relations from Fourier analysis, the counting function $N_m(P;b)$ is now given by
\begin{align*}
  N_m(P;b) = \int_{\T^{r+1}} T_P(\Ua) e(-\alpha_0 b) \D \Ua.
\end{align*}
Our first task is to bound $T_P(\Ua)$ from above.
Define the discrete differencing operator via its action on a test polynomial $G \in \Z[\B x_1, \dots, \B x_m]$ as
\begin{align}\label{2.1}
  \Delta_{i, \B h}G(\ol x) =  G(\B x_1, \dots, \B x_i + \B h, \dots, \B x_m) - G(\B x_1, \dots, \B x_i , \dots, \B x_m).
\end{align}
The following is an easy modification of Lemma~3.1 of \cite{FRF}.

\begin{lem}\label{L2.1}
  Let $1 \leqslant k \leqslant 2m$ and $j_l$, $1 \leqslant l \leqslant k$, be integers with $1 \leqslant j_l \leqslant m$. Then
  \begin{align*}
      |T_P(\Ua)|^{2^k} \ll P^{((2^k-1)m-k)s} \sum_{\B h_1, \dots, \B h_k } \sum_{\ol x} e \left( \Delta_{j_1, \B h_1} \cdots \Delta_{j_k, \B h_k} \F F_0(\ol x; \Ua)\right),
  \end{align*}
    where all sums range over suitable boxes of sidelength at most $2P$.
\end{lem}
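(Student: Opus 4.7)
The plan is to prove Lemma~\ref{L2.1} by iterated Weyl differencing, one block at a time. This is a routine multi-block analogue of the classical single-variable Weyl argument, of the same flavour as Lemma~3.1 of \cite{FRF}; the only novelty relative to that result is that the polynomial being differenced now contains, in addition to $\F F(\ol x;\ba)$, the contribution $\alpha_0 D(\ol x)$ of total degree $2m$, which extends the useful range of $k$ from $k \le d$ up to $k \le 2m$. I would proceed by induction on $k$, at each step singling out one block $\B x_{j_l}$, applying Cauchy--Schwarz, and introducing the shift $\B h_l$ via the usual substitution.

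For the base case $k=1$, factor $T_P(\Ua) = \sum_{\B x_i : i \ne j_1}\sum_{\B x_{j_1}} e(\F F_0)$ and apply Cauchy--Schwarz to pull out the $\ll P^{(m-1)s}$ outer blocks:
\begin{align*}
|T_P(\Ua)|^2 \ll P^{(m-1)s}\sum_{\B x_i : i \ne j_1}\Big|\sum_{\B x_{j_1}} e(\F F_0)\Big|^2.
\end{align*}
Expanding the inner square as a double sum over $\B x_{j_1}, \B y_{j_1}$ and substituting $\B h_1 = \B x_{j_1} - \B y_{j_1}$ converts it into a sum over $\B h_1$ and $\B y_{j_1}$ of $e(\Delta_{j_1,\B h_1}\F F_0)$, both ranging over boxes of sidelength at most $2P$. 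This establishes the base case with exponent $c_1 = m-1$.

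For the inductive step, suppose that at level $k$ we have $|T_P|^{2^k} \ll P^{c_k s} S_k$ with
\begin{align*}
S_k := \sum_{\B h_1,\dots,\B h_k}\sum_{\ol x} e(\Delta_{j_1,\B h_1}\cdots\Delta_{j_k,\B h_k}\F F_0).
\end{align*}
Squaring both sides yields $|T_P|^{2^{k+1}} \ll P^{2c_k s} S_k^2$. A second Cauchy--Schwarz, this time separating $\B x_{j_{k+1}}$ from the $(k+m-1)s$ remaining coordinates (the $k$ previously introduced $\B h$-vectors and the $m-1$ blocks other than $j_{k+1}$), followed by the substitution $\B h_{k+1} = \B x_{j_{k+1}} - \B y_{j_{k+1}}$, introduces $\Delta_{j_{k+1},\B h_{k+1}}$ at the cost of an additional factor $P^{(k+m-1)s}$. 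The resulting recursion $c_{k+1} = 2c_k + k + m - 1$, with $c_1 = m-1$, solves to $c_k = (2^k-1)m - k$, which matches the exponent in the statement.

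There is no substantive obstacle beyond careful bookkeeping. The one conceptual point worth noting is that $S_k$ is real and non-negative at every stage of the induction: summing over each newly introduced pair $(\B h_l, \B y_{j_l})$ simply reassembles the quantity $|\sum_{\B x_{j_l}}e(\cdots)|^2$, so the inductive step $S_k^2 \ge 0$ is compatible with the squaring operation and the right-hand side of the lemma, despite its complex-exponential form, is a bona fide real upper bound. The range $k \le 2m$ is merely the useful range: since we are in the regime $2m > d$, once $k$ exceeds $\max(d,2m)=2m$ the differenced polynomial reduces to a constant and the bound becomes trivial.
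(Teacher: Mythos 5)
Your proof is correct and follows the standard iterated Cauchy--Schwarz/Weyl-differencing argument, with the exponent recursion $c_{k+1}=2c_k+k+m-1$, $c_1=m-1$ correctly solved to $c_k=(2^k-1)m-k$. This is precisely the route the paper takes, since it simply defers to Lemma~3.1 of \cite{FRF}, which is the same argument; the only remark I would add is that the restriction $k\leqslant 2m$ in the statement is tied to $2m$ being an upper bound for the degree of $\F F_0$ only in the regime $2m>d$ of \S 2, whereas Lemma~\ref{L2.1} is also invoked in \S 5 where $d>2m$ --- there $k\leqslant 2m$ is merely what the applications require and not the point at which the bound trivialises.
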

This auxiliary lemma allows us to establish a tripartite Weyl inequality in terms of $\alpha_0$, but first we need to acquire a better understanding of the action of $\Delta$ on $D$. Observe that every vector $\B x_i$ occurs quadratically within $D(\ol x)$. It follows that the expression $\Delta_{j_1, \B h_1} \cdots \Delta_{j_{2m-1}, \B h_{2m-1}} D(\ol x; \Ua)$ vanishes whenever an entry of $(j_1, \dots,  j_{2m-1})$ occurs at least thrice, and otherwise depends only on $\B x_{j_{2m}}$, where $j_{2m}$ is the unique parameter occurring only once in $(j_1, \dots, j_{2m-1})$. Let $\sigma$ be a permutation of $(1,2,\dots,m)$ and suppose that $\B j = (\sigma(1), \dots, \sigma(m), \sigma(1), \dots, \sigma(m))$. Let further $\cal T \subseteq S_{2m}$ denote the group generated by the transpositions $(k, m+k)$, and write
\begin{align*}
    &{\det}_{\cal T} \left( (\B h_1, \dots, \B h_m)^t \, (\B h_{m+1}, \dots, \B h_{2m})\right) \\
    & \quad= \sum_{\tau \in \cal T} \det \left( (\B h_{\tau(1)}, \dots, \B h_{\tau(m)})^t \,(\B h_{\tau(m+1)}, \dots, \B h_{\tau(2m)})\right),
\end{align*}
then for fixed $\B h_1, \dots, \B h_{2m-1}$ we have
\begin{align*}
    \Delta_{j_1, \B h_1} \cdots  \Delta_{j_{2m-1},\B h_{2m-1}} D(\ol x) & =  {\det}_{\cal T} \left( (\B h_1, \dots, \B h_m)^t \, (\B h_{m+1}, \dots, \B h_{2m-1}, \B x_{\sigma(m)})\right) \nonumber\\
    & \qquad + \text{ terms independent of $\ol x$.}
\end{align*}
Define the $(2m-1)$-linear forms $d_{n}$  for $1 \leqslant n \leqslant s$ via
\begin{align}\label{2.2}
    d_n(\B x^{(1)}, \dots, \B x^{(2m-1)}) = {\det}_{\cal T} \left( (\B x^{(1)}, \dots, \B x^{(m)})^t \, (\B x^{(m+1)}, \dots, \B x^{(2m-1)}, \B e_n)\right) ,
\end{align}
where $\B e_n$ denotes the $n$-th unit vector in $\Z^s$.
In this notation, the Weyl-type lemma is as follows.

\begin{lem}\label{L2.2}
    Suppose that $l>0$ and $0 <\eta \leqslant 1$ are parameters and $\Ua \in \T^{r+1}$, then one of the following holds.
    \begin{enumerate}[(A)]
        \item We have $|T_P(\Ua)| \ll P^{ms-l\eta}$, or
        \item there are integers $0 \leqslant a_0 < q_0 \ll P^{(2m-1)\eta}$ satisfying $|\alpha_0 q_0 - a_0| \ll P^{-2m+(2m-1)\eta}$, or
        \item the number of integer vectors $\B h_1, \dots, \B h_{2m-1} \in [-P^{\eta}, P^{\eta}]^s$ satisfying
            \begin{align}\label{2.3}
                d_n(\B h_1, \dots, \B h_{2m-1}) = 0 \qquad (1 \leqslant n \leqslant s)
            \end{align}
            is asymptotically larger than $(P^{\eta})^{(2m-1)s - 2^{2m-1}l - \eps}$.
    \end{enumerate}
\end{lem}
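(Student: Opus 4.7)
The plan is to apply Weyl differencing to $T_P(\Ua)$ in order to reduce the polynomial in the exponential to a linear form in $\ol x$ whose coefficient depends linearly on $\alpha_0$. Since $D$ has total degree $2m$ and is quadratic in each $\B x_i$, while $\F F$ has total degree $d \le 2m-1$, the higher degree of $D$ allows us to arrange for $\F F$ to vanish (or reduce to a constant) after $2m-1$ differencings, while $\alpha_0 D$ survives as a linear expression from which information about $\alpha_0$ can be extracted.

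Concretely, I would apply Lemma~\ref{L2.1} with $k = 2m-1$ and the index pattern $(j_1,\dots,j_{2m-1}) = (\sigma(1),\sigma(1),\dots,\sigma(m-1),\sigma(m-1),\sigma(m))$ for an arbitrary permutation $\sigma \in S_m$. From the identity established immediately before the lemma,
\begin{align*}
    \Delta_{j_1,\B h_1}\cdots\Delta_{j_{2m-1},\B h_{2m-1}}D(\ol x) = \sum_{n=1}^{s} d_n(\B h_1,\dots,\B h_{2m-1})\, x_{\sigma(m),n} + C(\B h),
\end{align*}
with $C(\B h)$ independent of $\ol x$; and a simple degree count shows that $\Delta^{2m-1}_{\B j}\F F$ is similarly independent of $\ol x$. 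The sum over $\ol x$ then factors: the $(m-1)s$ coordinates of $\B x_i$ for $i \neq \sigma(m)$ contribute a factor $P^{(m-1)s}$, and each of the $s$ coordinates of $\B x_{\sigma(m)}$ yields a geometric sum bounded by $\min(P,\|\alpha_0 d_n(\B h)\|^{-1})$. Collecting terms, this should produce the bound
\begin{align*}
    |T_P(\Ua)|^{2^{2m-1}} \ll P^{(2^{2m-1}m - 2m)s} \sum_{\B h_1,\dots,\B h_{2m-1}}\prod_{n=1}^{s}\min\bigl(P,\|\alpha_0 d_n(\B h)\|^{-1}\bigr).
\end{align*}

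Next, if Case (A) fails, so that $|T_P(\Ua)| \gg P^{ms-l\eta}$, the display above forces the inner sum to be $\gg P^{2ms - 2^{2m-1}l\eta}$. I would then feed this lower bound into the standard multi-linear shrinking lemma of Davenport-Birch-Schmidt type, adapted to the $(2m-1)$-linear forms $d_n$ in the spirit of Lemma~3.2 of \cite{FRF} (with $d-1$ there replaced by $2m-1$). The output of this lemma is exactly the required dichotomy: either $\alpha_0$ admits a rational approximation $|\alpha_0 q_0 - a_0| \ll P^{-2m+(2m-1)\eta}$ with $0 \le a_0 < q_0 \ll P^{(2m-1)\eta}$ (Case~(B)), or the simultaneous vanishing locus of the $d_n$ contains $\gg (P^\eta)^{(2m-1)s - 2^{2m-1}l - \eps}$ lattice points in $[-P^\eta,P^\eta]^{(2m-1)s}$ (Case~(C)).

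The main obstacle I anticipate lies in executing the shrinking step, which requires iterating Dirichlet's theorem across all $2m-1$ vector arguments of the $(2m-1)$-linear forms $d_n$. At each iteration, one fixes all but one $\B h_j$, uses that the restricted inner sum is large for many tuples of the fixed variables to extract either a good approximation to $\alpha_0$ or small values of the linear forms in $\B h_j$, and then passes to a smaller box via a pigeonhole argument. Because the forms $d_n$ are determinantal, the $\B h_j$ enter in a partially symmetric way that must be exploited, and the Diophantine losses through all $2m-1$ iterations have to be tracked carefully so that the exponents $P^{(2m-1)\eta}$, $P^{-2m+(2m-1)\eta}$ and $(P^\eta)^{(2m-1)s - 2^{2m-1}l - \eps}$ come out exactly as stated.
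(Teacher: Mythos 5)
Your proposal follows the paper's proof essentially step for step: apply Lemma~\ref{L2.1} with $k=2m-1$, observe that $\Delta^{2m-1}\F F$ is constant in $\ol x$ because $\deg \F F = d < 2m$, so only the differenced $\alpha_0 D$ survives and is linear in the single remaining block $\B x_{\sigma(m)}$ with coefficients $\alpha_0 d_n(\cal H)$; sum out the inert blocks to get the bound
\[
|T_P(\Ua)|^{2^{2m-1}} \ll P^{(2^{2m-1}m-2m)s}\sum_{\cal H}\prod_{n=1}^s\min\bigl(P,\|\alpha_0 d_n(\cal H)\|^{-1}\bigr),
\]
and then invoke Birch's shrinking argument, which is exactly what the paper does (``the remainder of the proof follows precisely by the argument of Birch [\S 2]''). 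Your index pattern $(\sigma(1),\sigma(1),\dots,\sigma(m-1),\sigma(m-1),\sigma(m))$ differs from the paper's $(\sigma(1),\dots,\sigma(m),\sigma(1),\dots,\sigma(m-1))$ only by a permutation of the commuting $\Delta$'s, so it is equivalent.

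Two small inaccuracies in your description of the final step are worth flagging, though they do not affect the validity of the approach. First, Birch's argument does not ``iterate Dirichlet'' and produce the (B)/(C) dichotomy at each stage: the shrinking lemma is a pure lattice-point counting estimate iterated over the $2m-1$ vector arguments, and the case distinction is made just once at the very end (if some $d_n(\cal H)\ne 0$ in the shrunken box, one reads off a rational approximation to $\alpha_0$ with denominator dividing $d_n(\cal H)$; otherwise all surviving $\cal H$ satisfy \eqref{2.3}). Second, the shrinking lemma only uses multi-linearity of the forms $d_n$, not their determinantal structure; that structure enters only later, in Lemma~\ref{L2.3}, when excluding Case~(C) — so there is no ``partial symmetry'' to be exploited in the shrinking step itself.
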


\begin{proof}
    This is only a slight modification of Lemma~2.5 of Birch \cite{birch}. Applying Lemma~\ref{L2.1} with $k=2m-1$ yields
    \begin{align*}
        |T_P(\Ua)|^{2^{2m-1}} \ll P^{(2^{2m-1}m-3m+1)s} \sum_{\B h_1, \dots, \B h_{2m-1} } \sum_{\ol x} e \left( \Delta_{j_1, \B h_1} \cdots \Delta_{j_{2m-1}, \B h_{2m-1}} \F F_0(\ol x; \Ua)\right).
    \end{align*}
    Since every application of the difference operator reduces the degree by one, the argument of the exponential is linear in $\ol x$, and since we had $d<2m$, the dependence on the form $F$ has disappeared up to at most a constant twist. Write for brevity $\cal H = (\B h_1, \dots, \B h_{2m-1})$. In this notation we obtain
    \begin{align*}
        &\sum_{\ol x} e \left( \Delta_{j_1, \B h_1} \cdots \Delta_{j_{2m-1}, \B h_{2m-1}} \F F_0(\ol x; \Ua)\right) \\
        & \ll P^{(m-1)s} \bigg|\sum_{\B x_{j_{2m}}} e\left( \alpha_0 {\det}_{\cal T} \left( (\B h_1, \dots, \B h_m)^t \, (\B h_{m+1}, \dots, \B h_{2m-1}, \B x_{j_{2m}})\right)\right) \bigg|,
    \end{align*}
    and therefore
    \begin{align*}
        |T_P(\Ua)|^{2^{2m-1}} \ll P^{(2^{2m-1}m-2m)s} \sum_{\B h_1, \dots, \B h_{2m-1} } \prod_{n=1}^s \min \left\{ P, \|\alpha_0 d_n(\cal H)\|^{-1} \right\}.
    \end{align*}
    The remainder of the proof follows precisely by the argument of Birch \cite[\S 2]{birch}.
\end{proof}

Birch proceeds by interpreting the third case in Lemma~\ref{L2.2} geometrically. In our setting, however, we encounter a delicacy when embedding the discriminant function into the complex numbers. In fact, the complex embedding of the discriminant is $\det (X^*\, X)$, where $X^*$ denotes the adjoint of the matrix $X$, but since this operation is not polynomial, the complex discriminant function is not amenable to the methods of classical algebraic geometry. It turns out, however, that in our specific case the same ideas underlying the usual arguments from algebraic geometry may still be made to work over the real numbers.

\begin{lem}\label{L2.3}
  Case (C) of Lemma~\ref{L2.2} does not occur when
  \begin{align}\label{2.4}
      s>2^{2m-1}l.
  \end{align}
\end{lem}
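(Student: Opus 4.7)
The plan is to follow Birch's strategy from \cite[\S 2]{birch}, adapted to the real setting for the reasons outlined in the paragraph preceding the lemma. The argument falls naturally into two steps. First, the assumption of Case (C) is converted into a lower bound on the real dimension of the variety
\[
V = \bigl\{(\B h_1, \dots, \B h_{2m-1}) \in \R^{(2m-1)s} : d_n(\B h_1, \dots, \B h_{2m-1}) = 0 \text{ for all } 1 \le n \le s\bigr\}.
\]
Second, I produce a contradictory upper bound on $\dim_\R V$.

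For the first step, a standard counting argument (the real-variety analogue of Birch's \cite[Lemma~3.3]{birch}, obtainable via Schmidt-style lattice-point bounds on real algebraic sets and the integrality of the coefficients of the $d_n$) gives $\#\bigl(V \cap \Z^{(2m-1)s} \cap [-P^\eta, P^\eta]^{(2m-1)s}\bigr) \ll P^{\eta \dim_\R V + \eps}$. Together with the hypothesised lower bound of Case (C), this forces some irreducible component of $V$ to satisfy $\dim_\R V \ge (2m-1)s - 2^{2m-1}l$.

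For the second step I claim $\dim_\R V \le (2m-2)s$. The key observation, immediate from \eqref{2.2}, is that each $d_n$ is a sum of $2^m$ terms of the form $\det(A_\tau^t B_\tau)$ in which each $\B h_i$ appears in exactly one row of $A_\tau^t$ or one column of $B_\tau$; hence $d_n$ is linear in each vector argument $\B h_i$ individually. Fixing $\B h_1, \dots, \B h_{2m-2}$ and letting $\B h_{2m-1}$ vary, we may write $d_n = \B c_n(\B h_1, \dots, \B h_{2m-2})^t \B h_{2m-1}$, so that the vanishing condition becomes $C \B h_{2m-1} = \B 0$ with $C$ the $s \times s$ matrix whose rows are $\B c_n^t$. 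If one can exhibit a single configuration $(\B h_1^*, \dots, \B h_{2m-2}^*)$ at which $\det C \neq 0$, then by polynomiality the locus $\{\det C \ne 0\}$ is Zariski-open and dense in $\R^{(2m-2)s}$; on this locus the fibre of the projection $V \to \R^{(2m-2)s}$ is the single point $\{\B 0\}$, so $\dim_\R V \le (2m-2)s$ as required.

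The main obstacle is therefore the construction of a full-rank configuration. Since $\cal T$ consists of $2^m$ permutations, naive choices may suffer from unexpected cancellations; for instance, in the case $m=2$ the explicit formula $d_n = 4(\B h_1 \cdot \B h_3) (\B h_2)_n - 2(\B h_2 \cdot \B h_3) (\B h_1)_n - 2(\B h_1 \cdot \B h_2) (\B h_3)_n$ shows that $\B h_i^* = \B e_i$ gives a singular $C$, whereas a mild perturbation such as $\B h_1^* = \B e_1 + \B e_2$, $\B h_2^* = \B e_1 + \B e_3$ yields $\det C \ne 0$. I expect an analogous construction to work in general, but the verification for arbitrary $m$ will require a careful combinatorial analysis of the symmetrisation $\det_{\cal T}$. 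Once this is in place, combining the two dimension bounds gives $(2m-2)s \ge (2m-1)s - 2^{2m-1}l$, whence $s \le 2^{2m-1}l$, contradicting \eqref{2.4} and ruling out Case (C).
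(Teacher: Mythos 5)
Your first step --- deducing $\dim_\R V \ge (2m-1)s - 2^{2m-1}l$ from Case~(C) via a real analogue of Birch's lattice-point lemma --- is in line with the paper, which cites Birch's Lemma~3.1 for the same purpose. The second step, however, is not what the paper does: the paper observes (via Euler's theorem on homogeneous functions together with the $S_m$-invariance of $\det(\ol x^t\,\ol y)$) that every point of the solution set lies on $\{\det(\ol x^t\,\ol y)=0\}$ restricted to a diagonal $\B x_k=\B y_k$, and then runs a diagonal-restriction dimension count that relates the solution set to the determinantal variety $\{D(\ol x)=0\}$. Your linear-algebraic projection onto $(\B h_1,\dots,\B h_{2m-2})$ is a genuinely different device, and it does not yield the bound you want.

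The inference ``the generic fibre of $V\to\R^{(2m-2)s}$ is a point, hence $\dim_\R V\le(2m-2)s$'' is a dimension-counting fallacy: a component of $V$ lying entirely over the proper closed locus $\{\det C=0\}$ can have dimension strictly greater than that of the base, since there the fibre $\ker C$ is positive-dimensional. This actually happens. For $m=2$, using exactly the formula you write down, $d_n = 4(\B h_1\cdot\B h_3)(\B h_2)_n - 2(\B h_2\cdot\B h_3)(\B h_1)_n - 2(\B h_1\cdot\B h_2)(\B h_3)_n$, the locus of pairwise-orthogonal triples $\B h_1\cdot\B h_2 = \B h_1\cdot\B h_3 = \B h_2\cdot\B h_3 = 0$ is entirely contained in $V$, has dimension $3s-3$, and sits over the degenerate set where $C = 4\B h_2\B h_1^t - 2\B h_1\B h_2^t$ has rank $2 < s$. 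For $s>3$ this already exceeds $(2m-2)s = 2s$, so the inequality you claim is false. Consequently the unresolved combinatorial point you flag at the end (producing a full-rank configuration for general $m$) is not the real obstacle: even granting $\det C\not\equiv 0$, the conclusion $\dim_\R V \le (2m-2)s$ does not follow. A correct argument must engage with the degenerate strata directly rather than discard them as non-generic; the paper's specialisation $\B h_{2m}\mapsto\B h_m$ together with the $S_m$-symmetry and the subsequent reduction to $\{D=0\}$ is precisely the mechanism the paper uses for this, and it has no counterpart in your proposal.
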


\begin{proof}
  The set of all $\B h_1, \dots, \B h_{2m-1} \in \R^s$ satisfying \eqref{2.3} forms a real manifold $\sc M$ inside $\R^{(2m-1)s}$. Furthermore, Lemma~3.1 of Birch \cite{birch} shows that whenever case (C) of Lemma~\ref{L2.2} applies, then one has
  \begin{align}\label{2.5}
    \dim \sc M \geqslant (2m-1)s - 2^{2m-1}l.
  \end{align}
  Observe that for any fixed choice of $\B x_1, \dots, \B x_{m-1}$ and $\B y_1, \dots, \B y_{m-1}$ the polynomial $\det\left( (\B x_1, \dots,  \B x_m)^t \,(\B y_1, \dots, \B   y_{m}) \right)$
  is homogeneous in $\B x_m$ and $\B y_m$, and by Euler's theorem on homogeneous functions one has
  \begin{align*}
    \det ( (\B h_1, \dots, \B h_m)^t \, (\B h_{m+1}, \dots, \B h_{2m-1}, \B h_m)) = 0
  \end{align*}
  for all $\B h_1, \dots, \B h_{2m-1}$ satisfying \eqref{2.3}. Observe further that the value of $\det(\ol x^t \, \ol y)$ is invariant under the action of the symmetric group $S_m$ on the indices.
  Let
    $$\sc M_0 = \{ \ol x, \ol y \in \R^{ms}: \det(\ol x^t \, \ol y) =0\},$$
  and for $1 \le k \le m$ define recursively $\sc M_k = \sc M_{k-1} \cap \sc D_k$, where $\sc D_k \subseteq \R^{2ms}$ denotes the diagonal given by $\B x_k = \B y_k$. It is now clear that $\sc M \subseteq \sc M_1$.

  For a given integer $k$ satisfying $1 \le k \le m-1$, suppose that some tuples $\B x_1, \dots, \B x_{m-1}$ and $\B y_{1}, \dots, \B y_{m-1}$ with $\B x_i = \B y_i$ for all $1 \leqslant i \leqslant k$ satisfy
  \begin{align*}
    \dim \langle \B x_1, \dots, \B x_{m-1} \rangle = \dim \langle \B y_{1}, \dots, \B y_{m-1}\rangle = m-1,
  \end{align*}
    then the function $\det(\ol x^t \, \ol y)$ vanishes precisely if either $\B x_m \in \langle \B x_1, \dots, \B x_{m-1} \rangle$ and $\B y_m$ is arbitrary, or if $\B y_m \in \langle \B y_{1}, \dots, \B y_{m-1}\rangle$ and $\B x_m$ is arbitrary, so the equation $\det(\ol x^t \, \ol y)=0$, interpreted as an equation in $\B x_m$ and $\B y_m$, defines an $(m-1+s)$-dimensional manifold inside $\R^{2s}$. Accordingly, the restriction of $\det(\ol x^t \, \ol y)$ to $\sc D_{m}$ vanishes precisely for those vectors $\B x_{m}$ lying in the $(m-1)$-dimensional manifold $ \langle \B x_1, \dots, \B x_{m-1} \rangle \cup \langle \B y_{1}, \dots, \B y_{m-1}\rangle$. Recalling the invariance under $S_m$, we thus obtain the recursive relationship
  \begin{align}\label{2.6}
        \dim \sc M_{k+1} &= \dim \sc M_k-s,
  \end{align}
  and it is clear that after applying \eqref{2.6} iteratively $m-1$ times we obtain
  \begin{align*}
    \dim \sc M \leqslant \dim \sc M_1 &= \dim \sc M_m  + (m-1)s \\
    &= \dim \{ \ol x \in \R^{ms}: D(\ol x)=0\} + (m-1)s = 2(m-1)s.
  \end{align*}
  Under the hypotheses of the lemma this leads to a contradiction with \eqref{2.5}.
\end{proof}

If $\alpha_0$ has an approximation as in case (B) of Lemma~\ref{L2.2}, we may in a second step establish a result similar to that of Lemma~\ref{L2.2} in order to find a rational approximation for $\ba$. An application of Lemma~\ref{L2.1} with $k=d-2$ yields
\begin{align*}
  |T_P(\Ua)|^{2^{d-2}} \ll P^{((2^{d-2}-1)m - (d-2))s} \sum_{\B h_1, \dots, \B h_{d-2} } \sum_{\ol x} e \left( \Delta_{j_1, \B h_1} \cdots \Delta_{j_{d-2}, \B h_{d-2}} \F F_0(\ol x; \Ua)\right),
\end{align*}
and thus with Cauchy's inequality
\begin{align}\label{2.7}
  |T_P(\Ua)|^{2^{d-1}} \ll P^{(2^{d-1}-2)ms - (d-2)s} \sum_{\B h_1, \dots, \B h_{d-2} } \left|\sum_{\ol x} e \left( \Delta_{j_1, \B h_1} \cdots \Delta_{j_{d-2}, \B h_{d-2}} \F F_0(\ol x; \Ua)\right)\right|^2.
\end{align}
We perform a van der Corput step similar to \S 4 in \cite{bhb}. Let $q_0$ be as in Lemma~\ref{L2.2} (B), write $\nu = (2m-1)\eta$, and observe that
\begin{align*}
  & P^{2(1-\nu)s} \left|\sum_{\ol x} e \left( \Delta_{j_1, \B h_1} \cdots \Delta_{j_{d-2}, \B h_{d-2}} \F F_0(\ol x; \Ua)\right) \right|^2 \\
  & \quad \ll \left| \sum_{\ol x} \sum_{|\B u| \ll P^{1-\nu} } e \left( \Delta_{j_1, \B h_1} \cdots \Delta_{j_{d-2}, \B h_{d-2}} \F F_0(\B x_1, \dots, \B x_{j_{d-1}} + q_0 \B u, \dots, \B x_m; \Ua)\right)  \right|^2 \\
  & \quad \ll P^{ms} \sum_{\ol x} \left|\sum_{|\B u| \ll P^{1-\nu} } e \left( \Delta_{j_1, \B h_1} \cdots \Delta_{j_{d-2}, \B h_{d-2}} \F F_0(\B x_1, \dots, \B x_{j_{d-1}} + q_0 \B u, \dots, \B x_m; \Ua)\right)  \right|^2.
\end{align*}

The square expression can be expanded and is then
\begin{align*}
  &\sum_{|\B u|, |\B v| \ll P^{1-\nu} } e \bigg( \Delta_{j_1, \B h_1} \cdots \Delta_{j_{d-2}, \B h_{d-2}} \big( \F F_0(\B x_1, \dots, \B x_{j_{d-1}} + q_0 \B u, \dots, \B x_m; \Ua) \\
  &\qquad \qquad \qquad \qquad \qquad \qquad \qquad \quad - \F F_0(\B x_1, \dots, \B x_{j_{d-1}} + q_0 \B v, \dots, \B x_m; \Ua)\big)\bigg) \\
  & \ll P^{(1-\nu)s}\sum_{|\B w| \ll P^{1-\nu}}  e \left( \Delta_{j_1, \B h_1} \cdots \Delta_{j_{d-2}, \B h_{d-2}} \Delta_{j_{d-1}, q_0 \B w}\F F_0(\ol x; \Ua)\right),
\end{align*}
where we made a change of variables $\B x_{j_{d-1}} \mapsto \B x_{j_{d-1}}+q_0 \B v$ and $\B w = \B u - \B v$.
It follows that
\begin{align}\label{2.8}
    &\left|\sum_{\ol x} e \left( \Delta_{j_1, \B h_1} \cdots \Delta_{j_{d-2}, \B h_{d-2}} \F F_0(\ol x; \Ua)\right)\right|^2 \nonumber\\
    & \quad \ll P^{ms-(1-\nu)s} \sum_{|\B w| \ll P^{1-\nu}} \sum_{\ol x}e \left( \Delta_{j_1, \B h_1} \cdots \Delta_{j_{d-2}, \B h_{d-2}} \Delta_{j_{d-1}, q_0 \B w}\F F_0(\ol x; \Ua)\right).
\end{align}
Observe that
\begin{align}\label{2.9}
     &\Delta_{j_1, \B h_1} \cdots \Delta_{j_{d-2}, \B h_{d-2}} \Delta_{j_{d-1}, q_0 \B w}\F F(\ol x; \Ua)\nonumber \\
     &\quad = q_0 \sum_{k=1}^m M(j_1, \dots, j_{d-1}, k)\alpha_{(j_1, \dots, j_{d-1}, k)} \Phi(\B x_k, \B h_1, \dots, \B h_{d-2}, \B w) \nonumber\\
     &\qquad  + \text{ terms independent of $\ol x$},
\end{align}
where $M(\B j)$ is a combinatorial factor. On the other hand, it follows from the assumption on $\alpha_0$ that
\begin{align*}
  \frac{\D}{\D x_{ij}} e \left( \alpha_0 \Delta_{j_1, \B h_1} \cdots \Delta_{j_{d-2}, \B h_{d-2}} \Delta_{j_{d-1}, q_0 \B w} D(\ol x)\right) \ll \|q_0 \alpha_0 \| P^{2m-1-\nu} \ll P^{-1}
\end{align*}
for all $1 \leqslant i \leqslant m, 1 \leqslant j \leqslant s$, so a multi-dimensional summation by parts shows that the contribution to \eqref{2.8} stemming from $D$ is negligible.

For the sake of notational conciseness write $\cal H$ for the tuple $(\B h_1, \dots, \B h_{d-2})$. Define the $(d-1)$-linear forms $B_n$, $1 \leqslant n \leqslant s$, via
\begin{align*}
  \Phi(\B x^{(1)}, \dots, \B x^{(d)}) = \sum_{n=1}^s  x_n^{(d)} B_n (\B x^{(1)}, \dots, \B x^{(d-1)}),
\end{align*}
then a combination of \eqref{2.7}, \eqref{2.8} and \eqref{2.9} together with a familiar bound on linear exponential sums yields
\begin{align*}
  |T_P(\Ua)|^{2^{d-1}} &\ll P^{(2^{d-1}-1)ms - (d-1-\nu)s} P^{(m-1)s}\sum_{\cal H } \sum_{\B w} \bigg|\sum_{\B x_{j_d}} e(q_0  M(\B j)\alpha_{\B j} \Phi(\cal H, \B w,\B x_{j_d}))\bigg|\\
  &\ll P^{(2^{d-1}m - d + \nu)s} \Upsilon_{\B j}
\end{align*}
for every $\B j \in J$, where $\Upsilon_{\B j}$ is given by
\begin{align*}
    \Upsilon_{\B j} = \sum_{\cal H} \sum_{|\B w| \ll P^{1-\nu} } \prod_{n=1}^s \min \{P, \|M(\B j) q_0 \alpha_{\B j} B_n(\cal H, \B w) \|^{-1} \}.
\end{align*}
Define
\begin{align*}
    N_{\B j}(X, Y; Z) = \card \{ & |\B h_1|, \dots, |\B h_{d-2}| \leqslant X, |\B w| \leqslant Y: \\
    &\|M(\B j) q_0 \alpha_{\B j} B_n(\B h_1, \dots, \B h_{d-2}, \B w) \|< Z \; (1 \le n \le s) \},
\end{align*}
then standard arguments from the geometry of numbers (cf. \S 4 in \cite{bhb} or Lemma~3.4 in \cite{FRF2}) show that for any $0 < \theta \leqslant 1-\nu$ one has the estimate
\begin{align*}
  \Upsilon_{\B j} &\ll P^{s + \eps} N_{\B j}(P, P^{1-\nu}; P^{-1})\\
  &\ll P^{s+\eps} P^{(d-1)(1-\theta)s - \nu s} N_{\B j} (P^{\theta}, P^{\theta}; P^{-d+(d-1)\theta + \nu}).
\end{align*}
It follows that, if $|T_P(\Ua)| \gg P^{ms-k\theta}$ for some $k>0$ and $0 < \theta \leqslant 1-\nu$, then
\begin{align*}
  N_{\B j}(P^{\theta}, P^{\theta}; P^{-d+(d-1)\theta + (2m-1)\eta}) \gg (P^{\theta})^{(d-1)s - 2^{d-1}k - \eps},
\end{align*}
and we may conclude as follows.

\begin{lem}\label{L2.4}
    Suppose that $\alpha_0$ has an approximation as in case (B) of Lemma~\ref{L2.3} with denominator $q_0$, and let $k>0$ and $0 < \theta \leqslant 1-(2m-1)\eta$ be parameters. Then one of the following is true.
    \begin{enumerate}[(A)]
      \item Either $|T_P(\Ua)| \ll P^{ms-k\theta}$, or
      \item for any $\B j \in J$ one finds $q_{\B j} \ll P^{(d-1)\theta}$ and $0 \leqslant a_{\B j} < q_0 q_{\B j}$ satisfying
	\begin{align*}
	  |q_0 q_{\B j} \alpha_{\B j} - a_{\B j}| \ll P^{-d+(d-1)\theta + (2m-1)\eta},
	\end{align*}
        or
      \item the number of $(d-1)$-tuples $(\B h_1, \dots, \B h_{d-1})$ with $|\B h_i|\ll P^{\theta}$ $(1 \le i \le d-1)$ satisfying
	\begin{align*}
	    B_n(\B h_1, \dots, \B h_{d-1})=0 \qquad (1 \leqslant n \leqslant s)
	\end{align*}
	is asymptotically greater than $(P^{\theta})^{(d-1)s - 2^{d-1}k - \eps}$.
    \end{enumerate}
\end{lem}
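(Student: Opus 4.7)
The plan is to observe that the counting estimate on $N_{\B j}$ derived in the paragraphs immediately preceding the lemma statement does most of the work. Specifically, chaining the bound $|T_P(\Ua)|^{2^{d-1}} \ll P^{(2^{d-1}m - d + \nu)s} \Upsilon_{\B j}$ obtained from Lemma~\ref{L2.1} and the van der Corput step with the standard geometry-of-numbers reduction (cf.\ \cite[\S 4]{bhb} or \cite[Lemma~3.4]{FRF2}), and assuming the failure of case (A) (so $|T_P(\Ua)| \gg P^{ms - k\theta}$), one arrives at
\begin{align*}
N_{\B j}(P^{\theta}, P^{\theta}; P^{-d+(d-1)\theta + (2m-1)\eta}) \gg (P^{\theta})^{(d-1)s - 2^{d-1}k - \eps}.
\end{align*}

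To dispense with the rest, I would split the tuples counted by $N_{\B j}$ according to whether the integers $B_n(\B h_1, \dots, \B h_{d-2}, \B w)$ vanish for every $n \in \{1, \dots, s\}$ or not. If at least half of the counted tuples satisfy these equations identically, then relabelling $\B w$ as $\B h_{d-1}$ yields case (C) immediately. Otherwise, among the remaining tuples there must be at least one specific $(\B h_1, \dots, \B h_{d-2}, \B w)$ and some index $n$ for which $c := B_n(\B h_1, \dots, \B h_{d-2}, \B w)$ is a non-zero integer. Since $B_n$ is a $(d-1)$-linear form with integer coefficients evaluated at arguments of size $\ll P^\theta$, one has $|c| \ll P^{(d-1)\theta}$. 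Combining this with the approximate inequality $\|M(\B j) q_0 \alpha_{\B j} c\| \ll P^{-d+(d-1)\theta+(2m-1)\eta}$ furnishes an integer $a$ with $|M(\B j) q_0 c \alpha_{\B j} - a| \ll P^{-d+(d-1)\theta+(2m-1)\eta}$. After absorbing the bounded combinatorial factor $M(\B j)$ into the denominator by setting $q_{\B j} = |M(\B j) c|$ and $a_{\B j}$ equal to $a$ up to a sign adjustment and a reduction modulo $q_0 q_{\B j}$, this is precisely case (B).

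I expect the main obstacle to lie not in the dichotomy argument itself, which follows the classical Birch blueprint (cf.\ \cite[\S 3]{birch}), but in maintaining the precise accounting of exponents through the van der Corput and geometry-of-numbers steps in the presence of the extra factor $P^{(2m-1)\eta}$ coming from the assumed rational approximation for $\alpha_0$. In particular, one has to verify that the truncation $\theta \le 1 - (2m-1)\eta$ in the hypothesis is compatible with the van der Corput dilation $|\B u|, |\B v| \ll P^{1-\nu}$ and that the combinatorial factor $M(\B j)$ remains bounded uniformly in the parameters at play. Once this bookkeeping is dispatched, the conclusion follows by standard arguments.
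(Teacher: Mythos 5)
Your proposal is correct and follows essentially the same route as the paper, which simply cites the proof of Lemma~3.4 of \cite{FRF} and relies on the displays immediately preceding the statement. The dichotomy you run — split the tuples counted by $N_{\B j}$ according to whether all forms $B_n$ vanish, use the geometry-of-numbers bound $N_{\B j}(P^{\theta}, P^{\theta}; P^{-d+(d-1)\theta + (2m-1)\eta}) \gg (P^{\theta})^{(d-1)s - 2^{d-1}k - \eps}$ to reach case~(C) in the vanishing case, and otherwise extract a nonzero $M(\B j) c$ to serve as the denominator in case~(B) — is precisely the Birch blueprint invoked there.
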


This follows as in the proof of Lemma~3.4 in \cite{FRF}, and as in Lemma~3.5 of the same work the third case may be excluded by demanding that
\begin{align}\label{2.10}
    s - \dim \sing F > 2^{d-1}k.
\end{align}
The next step is to combine Lemmata \ref{L2.2} and \ref{L2.4} in order to bound $T_P(\Ua)$ on the minor arcs.

\section{The minor arcs in the case $2m>d$}

The goal of this section is to define our sets of major and minor arcs, and to show that under certain conditions the former set can be taken arbitrarily small. Throughout this section we will always assume the inequalities \eqref{2.4} and \eqref{2.10} to hold.

For non-negative coprime integers $a <q$ denote by $\F M_0(q,a)$ the set of $\alpha \in \T$ satisfying $|q\alpha-a| \leqslant P^{-2m+(2m-1)\eta}$, and define further
\begin{align*}
    \F M_0(P; \eta) = \bigcup_{q=1}^{P^{(2m-1)\eta}} \bigcup_{\substack{a=0\\(a,q)=1}}^{q-1} \F M_0(q,a)
\end{align*}
and $\F m_0(P; \eta) = \T \setminus \F M_0(P; \eta)$, where the parameter $P$ will be suppressed whenever there is no danger of confusion. Lemma~\ref{L2.2} then implies that whenever $\alpha_0 \not\in \F M_0(\eta)$, then we have $|T_P(\Ua)| \ll P^{ms-l\eta+\eps}$. This allows us to establish our first pruning lemma.

\begin{lem}\label{L3.1}
  Suppose that the parameters $l >0$ and $\eta_* \in (0,1]$ satisfy
  \begin{align}\label{3.1}
    l>2m+rd
  \end{align}
  and
  \begin{align}\label{3.2}
    (l-2(2m-1))\eta_* > rd.
  \end{align}
  Then there exists some $\delta>0$ for which one has
  \begin{align*}
    \int_{\F m_0(\eta_*)} |T_P(\Ua)| \D \alpha_0 \ll P^{ms-2m-rd-\delta}
  \end{align*}
  uniformly for all $\ba \in \T^r$.
\end{lem}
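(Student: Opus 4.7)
The approach combines the pointwise Weyl bound from Lemma~\ref{L2.2} (with case~(C) permanently excluded via~\eqref{2.4} and Lemma~\ref{L2.3}) with a standard Farey measure estimate on the major-arc set. The key observation is that $\F m_0(\eta_*)$ naturally splits into two regimes requiring the two hypotheses~\eqref{3.1} and~\eqref{3.2} separately.

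First I would peel off the set $\F m_0(1)$ on which $\alpha_0$ admits no useful rational approximation at all. Applying Lemma~\ref{L2.2} with $\eta=1$---case~(B) is ruled out by the very definition of $\F m_0(1)$---yields the pointwise bound $|T_P(\Ua)|\ll P^{ms-l}$. Since $\F m_0(1)\subseteq \T$ has measure at most $1$, we obtain
\begin{align*}
    \int_{\F m_0(1)} |T_P(\Ua)|\D\alpha_0 \ll P^{ms-l} \ll P^{ms-2m-rd-\delta}
\end{align*}
for some $\delta>0$, directly by~\eqref{3.1}.

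The remaining piece $\F M_0(1)\setminus \F M_0(\eta_*)$ I would dissect dyadically as $\bigcup_{k=0}^{K-1}\bigl(\F M_0(\eta_{k+1})\setminus \F M_0(\eta_k)\bigr)$ with $\eta_0=\eta_*$, $\eta_K=1$ and $\eta_{k+1}=(1+\epsilon')\eta_k$ for a small auxiliary $\epsilon'>0$, so that $K\ll \log P$. On the $k$-th shell, Lemma~\ref{L2.2} applied with $\eta=\eta_k$ delivers $|T_P(\Ua)|\ll P^{ms-l\eta_k}$, while a routine Farey count estimates the measure of the containing set $\F M_0(\eta_{k+1})$ by $\ll P^{-2m+2(2m-1)\eta_{k+1}}$, arising from $\ll P^{2(2m-1)\eta_{k+1}}$ coprime pairs $(a_0,q_0)$ with $q_0\le P^{(2m-1)\eta_{k+1}}$, each contributing an arc of length $\ll q_0^{-1}P^{-2m+(2m-1)\eta_{k+1}}$. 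Combining these yields a contribution of
\begin{align*}
    \ll P^{ms-2m-(l-2(2m-1))\eta_k + O(\epsilon')}
\end{align*}
from the $k$-th shell. Hypothesis~\eqref{3.2} forces $l>2(2m-1)$, so the exponent is strictly decreasing in $\eta_k$; the worst case thus occurs at $k=0$, and choosing $\epsilon'$ small enough to absorb the $\log P$ factor coming from summing over the $K$ shells, we conclude that the annular contribution is $\ll P^{ms-2m-rd-\delta}$ for some $\delta>0$ by~\eqref{3.2}.

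The principal obstacle is really the coordination between the two hypotheses: \eqref{3.1} is dictated by the set $\F m_0(1)$, where the trivial measure bound is the best available and only a strong pointwise estimate (with $\eta=1$) saves the day; \eqref{3.2} in contrast extracts the measure savings of the Farey structure on the annulus. Beyond this, the only technical care needed lies in the bookkeeping of the dyadic decomposition---ensuring that the $O(\epsilon')$ loss in the exponent and the $O(\log P)$ from the number of layers are absorbed into $\delta$---which is straightforward given the strict inequalities in~\eqref{3.1} and~\eqref{3.2}.
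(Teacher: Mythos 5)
Your proposal is correct and follows essentially the same route as the paper: peel off $\F m_0(1)$ using the pointwise bound from Lemma~\ref{L2.2} (with case (C) excluded) together with hypothesis~\eqref{3.1}, then dissect the annular region between $\F M_0(1)$ and $\F M_0(\eta_*)$ into finitely many shells, estimating each shell by the major-arc measure $\vol\F M_0(\eta)\ll P^{-2m+2(2m-1)\eta}$ paired with the minor-arc pointwise bound, with hypothesis~\eqref{3.2} supplying the needed positive gap at the narrowest shell. One small inaccuracy: since $\eta_*$ and your multiplicative step $\epsilon'$ are both fixed parameters independent of $P$, the number of shells $K=\log(1/\eta_*)/\log(1+\epsilon')$ is $O(1)$, not $O(\log P)$; there is no logarithmic factor to absorb, which is exactly why the paper's decomposition into $T_1=O(1)$ pieces works without any further care.
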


\begin{proof}
  Let $l$ and $\eta_*$ be given according to \eqref{3.1} and \eqref{3.2}. We can find a sequence $(\eta_i)$ with the property
  \begin{align}\label{3.3}
    1 = \eta_0 > \eta_1 > \dots > \eta_{T_1} = \eta_*
  \end{align}
  and subject to the condition
  \begin{align}\label{3.4}
    (\eta_{i-1} - \eta_i)l < (l-2(2m-1))\eta_* -rd \qquad (1 \leqslant i \leqslant T_1).
  \end{align}
  This is always possible with $T_1=O(1)$.
  It follows from Lemma~\ref{L2.2} and \eqref{3.1} that for some $\delta>0$ we have
  \begin{align*}
    \int_{\F m_0(\eta_0)} |T_P(\Ua)| \D \alpha_0 \ll P^{ms-2m -rd-\delta}.
  \end{align*}
  Furthermore, a straightforward calculation shows that
  \begin{align}\label{3.5}
    \vol \F M_0(\eta) \ll P^{-2m +2(2m-1)\eta}.
  \end{align}
  We may therefore decompose the remaining set $\F m_0(\eta_*) \setminus \F m_0(\eta_0)$ according to \eqref{3.3}. With \eqref{3.5} and Lemma~ \ref{L2.2} (A), this yields
  \begin{align*}
      \int_{\F m_0(\eta_*) \setminus \F m_0(\eta_0)} |T_P(\Ua)| \D \alpha_0 & = \sum_{i=1}^{T_1} \int_{\F M_0(\eta_{i-1}) \setminus \F M_0(\eta_i)}    |T_P(\Ua)| \D \alpha_0 \\
      & \ll \max_{1 \leqslant i \leqslant T_1} \vol \F M_0(\eta_{i-1}) \sup_{\alpha_0 \in \F m_0(\eta_i)} |T_P(\Ua)|\\
      & \ll \max_{1 \leqslant i \leqslant T_1} P^{-2m+2(2m-1)\eta_{i-1}}P^{ms-l\eta_i+\eps},
  \end{align*}
  and the exponent on the right hand side is
  \begin{align*}
      ms-2m+l(\eta_{i-1}-\eta_i) - (l-2(2m-1))\eta_{i-1} + \eps < ms-2m-rd
  \end{align*}
  by \eqref{3.4}, provided $\eps$ has been taken sufficiently small. This proves the lemma.
\end{proof}

The second pruning step involves the major arcs associated to the vector $\ba$. Write $\F M(P; \theta, \eta)$ for the set of $\Ua \in \T^{r+1}$ with $\alpha_0 =a_0/q_0 + \beta_0 \in \F M_0(\eta)$ for which there exist entrywise coprime integer vectors $\B a, \B q$ satisfying
\begin{align*}
    |\alpha_{\B j} q_0 q_{\B j} - a_{\B j}| \leqslant P^{-d + (d-1)\theta + (2m-1)\eta} \quad \text{and} \quad q_{\B j} \leqslant P^{(d-1)\theta} \qquad (\B j \in J),
\end{align*}
and let $\F m(P; \theta, \eta) = \T^{r+1} \setminus \F M(P; \theta, \eta)$, where the parameter $P$ will typically be suppressed. Again, Lemma~\ref{L2.4} implies that we have $|T_P(\Ua)| \ll P^{ms-k\theta+\eps}$ for all $\Ua \in \F m(\theta, \eta)$ having $\alpha_0 \in \F M_0(\eta)$.

It is desirable to have a unique parameter for measuring the size of $T_P(\Ua)$ on the minor arcs, and in fact it will transpire in the course of the argument that no generality is lost by setting
\begin{align}\label{3.6}
  k \theta = l \eta.
\end{align}
Thus the width of $\F M(\theta, \eta)$ can be measured in terms of $\theta$ alone, and we will suppress the redundant parameter $\eta$ in the future. With this convention we have $|T_P(\Ua)| \ll P^{ms-k\theta+\eps}$ for all $\Ua \in \F m(\theta)$, so it respects both the respective case distinctions of Lemmata \ref{L2.2} and \ref{L2.4} simultaneously.

\begin{lem}\label{L3.2}
  Let $\eta_*$ be the parameter obtained in Lemma~\ref{L3.1}, and suppose that the conditions
  \begin{align}\label{3.7}
    (2m-1+l/k)\eta_* \leqslant 1
  \end{align}
   and
   \begin{align}\label{3.8}
      \frac{2r(d-1)}{k} + \frac{(r+2)(2m-1)}{l} <1
   \end{align}
   are satisfied. Then for any $\theta \in (0, 1-(2m-1)\eta_*]$ there exists a $\delta>0$ such that
   \begin{align*}
      \int_{\F m(\theta)} |T_P(\Ua)| \D \Ua \ll P^{ms-rd-2m-\delta}.
   \end{align*}
\end{lem}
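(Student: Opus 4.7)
The strategy is to decompose $\F m(\theta)$ according to whether $\alpha_0 \in \F m_0(\eta_*)$ or $\alpha_0 \in \F M_0(\eta_*)$, handle the first piece directly via Lemma~\ref{L3.1}, and then prune the second piece along a geometric sequence of scales in $\theta$. Concretely, I would first split
\[
  \int_{\F m(\theta)} |T_P(\Ua)| \D \Ua = I_1 + I_2,
\]
where $I_1$ is the contribution from $\alpha_0 \in \F m_0(\eta_*)$ and $I_2$ the contribution from $\alpha_0 \in \F M_0(\eta_*)$ with $\Ua \in \F m(\theta)$. Fubini's theorem combined with Lemma~\ref{L3.1} immediately yields $I_1 \ll P^{ms-2m-rd-\delta}$, since the bound of Lemma~\ref{L3.1} is uniform in $\ba \in \T^r$ and $\T^r$ has unit volume.

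For $I_2$, I would set $A = 2r(d-1)/k + (r+2)(2m-1)/l$, which by \eqref{3.8} satisfies $A<1$, and fix $\mu \in (A,1)$. Starting from $\theta_0 = 1-(2m-1)\eta_*$ I would define $\theta_i = \mu \theta_{i-1}$, stopping at the first index $T = O(1)$ with $\theta_T \le \theta$. The region of integration for $I_2$ is then covered by
\[
  \bigl(\F m(\theta_0) \cap \{\alpha_0 \in \F M_0(\eta_*)\}\bigr) \;\cup\; \bigcup_{i=1}^{T} \bigl(\F M(\theta_{i-1}) \setminus \F M(\theta_i)\bigr),
\]
and on each piece I would exploit the combined Weyl bound $|T_P(\Ua)| \ll P^{ms-k\theta_j+\eps}$ valid throughout $\F m(\theta_j)$ (which is precisely the content of the remark preceding the lemma, assembling cases (A) of Lemmata \ref{L2.2} and \ref{L2.4} via the convention $k\theta = l\eta$). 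The volume of $\F M_0(\eta_*)$ is supplied by \eqref{3.5}, and a routine counting over the moduli $q_0 \ll P^{(2m-1)\eta}$ and $q_{\B j} \ll P^{(d-1)\theta}$ yields
\[
  \vol \F M(\theta) \ll P^{-2m-rd + (r+2)(2m-1)\eta + 2r(d-1)\theta} = P^{-2m-rd + kA\theta}
\]
under $k\theta = l\eta$.

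For the outer core, multiplying the Weyl bound by $\vol \F M_0(\eta_*)$ gives a contribution $\ll P^{ms - k\theta_0 - 2m + 2(2m-1)\eta_* + \eps}$; hypothesis \eqref{3.7} is equivalent to $k\theta_0 \ge l\eta_*$, so this is majorised by $P^{ms - 2m - (l - 2(2m-1))\eta_* + \eps}$, and \eqref{3.2} (already invoked in Lemma~\ref{L3.1}) furnishes the required saving. For each annulus $\F M(\theta_{i-1}) \setminus \F M(\theta_i)$, the product of the Weyl bound at $\theta_i$ and $\vol \F M(\theta_{i-1})$ is $P^{ms - 2m - rd + k(A\theta_{i-1} - \theta_i) + \eps}$, and the geometric choice forces $A\theta_{i-1} - \theta_i = (A-\mu)\theta_{i-1} \le (A-\mu)\theta < 0$ uniformly, yielding a positive power saving that can be absorbed into $\delta$ once $\eps$ is taken sufficiently small. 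The main difficulty is the volume computation for $\F M(\theta)$ — in particular, making the dependence on $q_0$ inside the $\alpha_{\B j}$-arcs explicit and correctly handling the coprimality constraint — together with the verification that the combined Weyl bound applies uniformly across $\F m(\theta_j)$ regardless of whether $\alpha_0 \in \F M_0(k\theta_j/l)$ or not, which is precisely the role of the identification $k\theta = l\eta$.
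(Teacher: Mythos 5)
Your overall architecture matches the paper's: prune $\alpha_0 \in \F m_0(\eta_*)$ via Lemma~\ref{L3.1}, and handle $\alpha_0 \in \F M_0(\eta_*)$ by an annulus decomposition of $\F M(\theta)$-arcs with $\vol \F M(\theta) \ll P^{-2m-rd+kA\theta}$ and a telescoping Weyl bound. The volume computation is correct, and the geometric sequence with ratio $\mu \in (A,1)$ is a legitimate way to realise condition \eqref{3.9}.

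However, there is a genuine soft spot in where you start the sequence. You take $\theta_0 = 1-(2m-1)\eta_*$, whereas the paper starts at $\theta_* = (l/k)\eta_*$, which by \eqref{3.7} satisfies $\theta_* \le 1-(2m-1)\eta_* = \theta_0$ (equality iff \eqref{3.7} is tight). This distinction matters because the combined estimate ``$|T_P(\Ua)| \ll P^{ms-k\theta+\eps}$ for all $\Ua \in \F m(\theta)$'' under the convention $k\theta = l\eta$ is only established when Lemma~\ref{L2.4} is applicable with $\eta = (k/l)\theta$, that is when $\theta \le 1 - (2m-1)(k/l)\theta$, or equivalently $\theta \le l/(l+(2m-1)k)$. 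By \eqref{3.7} this threshold equals $\theta_*$ at the boundary and is $\le \theta_0$, so at any scale $\theta_j \in (\theta_*, \theta_0]$ the ``combined Weyl bound on $\F m(\theta_j)$'' you invoke is simply not established by the remark you cite. The identification $k\theta=l\eta$, which you call on in your closing sentence, cannot repair this: with $\eta=(k/l)\theta_j$ the hypothesis of Lemma~\ref{L2.4} fails precisely in that range. The way to rescue your argument at those scales is to exploit the fact that you have restricted $I_2$ to $\alpha_0 \in \F M_0(\eta_*)$: there the approximation denominator for $\alpha_0$ satisfies $q_0 \ll P^{(2m-1)\eta_*}$, so Lemma~\ref{L2.4} may be applied with the \emph{fixed} $\eta=\eta_*$, whose hypothesis is $\theta \le 1-(2m-1)\eta_* = \theta_0$ and is therefore satisfied; one also needs to check that $\F m(\theta_j, (k/l)\theta_j) \subseteq \F m(\theta_j, \eta_*)$ for $\theta_j \ge \theta_*$, which is the case since the major arcs grow with $\eta$. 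You do not make this observation, and without it the bound at the outer scales is unjustified. The paper sidesteps the whole issue by starting the sequence at $\theta_*$, where $\eta = (k/l)\theta_* = \eta_*$ and the convention coincides with the fixed value; everything below $\theta_*$ then has both $\theta$ and $(k/l)\theta$ decreasing, so the hypothesis of Lemma~\ref{L2.4} only gets easier.
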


\begin{proof}
  A standard computation shows that
  \begin{align*}
    \vol \F M(\theta) &\ll \sum_{q_0=1}^{P^{(2m-1)\eta}} \sum_{a_0=0}^{q_0-1} \frac{P^{-2m+(2m-1)\eta}}{q_0} \prod_{\B j \in J} \left(\sum_{q_{\B j}=1}^{P^{(d-1)\theta}} \sum_{a_{\B j}=0}^{q_0q_{\B j}-1} \frac{P^{-d+(d-1)\theta + (2m-1)\eta}}{q_{\B j}q_0}\right) \\
     &\ll P^{-2m-rd+2r(d-1)\theta + (r+2)(2m-1)(k/l) \theta},
  \end{align*}
  where we used \eqref{3.6}. We now fix a sequence $(\theta_i)$ satisfying
  \begin{align*}
      (l/k)\eta_* = \theta_* =\theta_0 > \theta_1 > \dots > \theta_{T_2} = \theta > 0,
  \end{align*}
  and having the property that
  \begin{align}\label{3.9}
      k(\theta_{i-1}-\theta_i) < (k-2r(d-1)-(r+2)(2m-1)(k/l)) \theta
  \end{align}
  for each $i$. This is possible by \eqref{3.8}, and \eqref{3.7} ensures via \eqref{3.6} that Lemma~\ref{L2.4} is applicable. In fact, from the definition of $\F m(\theta)$ above we have
  \begin{align*}
    \int_{\F m(\theta_*)} |T_P(\Ua)| \D \Ua
    & \ll \int_{\T^r} \int_{\F m_0(\eta_*)} |T_P(\Ua)| \D \alpha_0 \D \ba + \vol \F M_0(\eta_*) \sup_{\Ua \in \F m(\theta_*)}|T_P(\Ua)|,
  \end{align*}
  where the first term is $O(P^{ms-rd-2m-\delta})$ by Lemma~\ref{L3.1} and the second term can be bounded above by $P^{-2m+2(2m-1)\eta_*}P^{ms-k\theta_*+\eps}$. On recalling \eqref{3.6} and \eqref{3.2} we see that for sufficiently small $\eps$ the exponent is smaller than $ms-rd-2m$.

  We now argue as before and find that
  \begin{align*}
    \int_{\F m(\theta) \setminus \F m(\theta_*)}|T_P(\Ua)| \D \Ua &\ll \max_{1 \leqslant i \leqslant T_2}\vol \F M(\theta_{i-1}) \sup_{\Ua \in \F m(\theta_i)} |T_P(\Ua)|\\
    & \ll \max_{1 \leqslant i \leqslant T_2}  P^{-2m-rd+(2r(d-1) + (r+2)(2m-1)(k/l)) \theta_{i-1}}P^{ms-k\theta_i+\eps},
  \end{align*}
  and the exponent is
  \begin{align*}
    ms-rd-2m+k(\theta_{i-1}-\theta_i) - \left(k - 2r(d-1)-(r+2)(2m-1)(k/l) \right)\theta_{i-1}+\eps,
  \end{align*}
  which is smaller than $ms-rd-2m$ by \eqref{3.9} whenever $\eps$ is sufficiently small.
\end{proof}

Observe that Lemmata \ref{L3.1} and \ref{L3.2} are compatible only if one can find a value for $\eta_*$ satisfying both \eqref{3.2} and \eqref{3.7}. This is possible if and only if
\begin{align}\label{3.10}
 \frac{rd}{k} + \frac{(2+rd)(2m-1)}{l} <1.
\end{align}
This condition fully encompasses \eqref{3.1}. It follows that, if the conditions \eqref{3.8} and \eqref{3.10} are satisfied, we may choose $\theta$ (and thereby also $\eta$) arbitrarily small.

\section{The major arcs bound for $2m>d$}

For the analysis of the contribution from our narrow set of major arcs it is desirable to have approximations of the components of $\Ua$ that use the same denominator. We therefore set
\begin{align*}
    q= \lcm (q_0, \B  q) \ll P^{(r(d-1) + (2m-1)(k/l))\theta}.
\end{align*}
Write $(r(d-1) + (2m-1)(k/l))\theta= \omega$, and for fixed $q, \U a$ let $\F N(q,\UA)$ denote the set of all $\Ua \in \T^{r+1}$ satisfying
\begin{align*}
    |\alpha_0 - a_0/q| \leqslant  P^{-2m+\omega} \quad \text{ and } \quad |\alpha_{\B j} - a_{\B j}/q| \leqslant  P^{-d+\omega} \qquad (\B j \in J).
\end{align*}
Define further
\begin{align*}
    \F N(\theta) = \bigcup_{q=1}^{P^{\omega}} \bigcup_{\substack{\U a=0 \\ (\U a,q)=1}}^{q-1} \F N(q, \U a).
\end{align*}
Then $\F M(\theta) \subseteq \F N(\theta)$. One computes
\begin{align}\label{4.1}
  \vol \F N(\theta) \ll P^{-2m-rd+(2r+3)\omega}.
\end{align}
Define
\begin{align}\label{4.2}
  S_q(\U a) = \sum_{\ol x = 1}^q e(\F F_0(\ol x; q^{-1} \U a))
\end{align}
and
\begin{align} \label{4.3}
  v_P(\Ub) = \int_{[-P, P]^{ms}} e(\F F_0(\ol{\bm \xi}; \Ub)) \D \ol{\bm \xi},
\end{align}
then a standard argument reveals that
\begin{align}\label{4.4}
  |T_P(\Ua)- q^{-ms}S_q(\U a) v_P(\Ub)| \ll P^{ms-1}q \left( \sum_{\B j \in J} |\beta_{\B j}|P^d + |\beta_0|P^{2m}  + 1\right).
\end{align}
Write further
\begin{align}\label{4.5}
    \F S_b(P) = \sum_{q=1}^{P^\omega} q^{- ms} \sum_{\substack{\U a=0\\(\U a,q)=1}}^{q-1} S_q(\U a) e(-a_0b/q)
\end{align}
and
\begin{align}\label{4.6}
 \F J_b(P) = \int_{|\bb| \leqslant P^{-d+\omega}} \int_{|\beta_0| \leqslant P^{-2m+\omega}} v_P(\Ub) e(-b\beta_0) \D \Ub
\end{align}
for the truncated singular series and integral. It then follows from \eqref{4.4} and \eqref{4.1} that
\begin{align}\label{4.7}
  \int_{\F N(\theta)} T_P(\Ua) e(-\alpha_0b) \D \Ua = \F S_b(P) \F J_b(P) + O(P^{ms-rd-2m+(2r+5)\omega - 1}),
\end{align}
where the error is acceptable if $\theta$ has been chosen small enough. By a change of variables one has
\begin{align}\label{4.8}
    v_P(\Ub) = P^{ms}v_1(P^d\bb, P^{2m }\beta_0),
\end{align}
and thus
\begin{align}\label{4.9}
    \F J_b(P)= P^{ms-rd-2m} \int_{| \Ub | \leqslant P^{\omega}} v_1(\Ub) e(b \beta_0 /P^{2m}) \D \Ub.
\end{align}
It remains to show that $\F S_b(P)$ and the integral in the expression for $\F J_b(P)$ converge as $P \to \infty$ and reproduce the expected main term.

\begin{lem}\label{L4.1}
    The terms of the singular series are bounded by
    \begin{align*}
        |q^{-ms}S_q(\U a)| \ll q^{\eps}\min \left\{\left(\frac{q}{(q,a_0)} \right)^{\frac{l}{2m-1}}, q^{\left(\frac{2m-1}{l} + \frac{d-1}{k} \right)^{-1}}\right\}.
    \end{align*}
\end{lem}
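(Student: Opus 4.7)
The plan is to apply the Weyl-type estimates of Lemmata \ref{L2.2} and \ref{L2.4} directly to the sum $T_q(\U a/q)$ at the rational frequency $\Ua = \U a/q$ and sidelength parameter $P = q$. The function $\ol x \mapsto e(\F F_0(\ol x; \U a/q))$ is periodic modulo $q$, so $T_q(\U a/q)$ agrees with $S_q(\U a)$ up to a bounded combinatorial factor, and any upper bound on the former transfers to the latter. By the multiplicativity of complete exponential sums over coprime factorizations of $q$, I further reduce the problem to the case of a prime power $q = p^h$.

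For the first bound, let $q_0^\star = q/(q, a_0)$ denote the reduced denominator of $\alpha_0 = a_0/q$. A brief calculation shows that for $\eta < 1$, any approximation $a/q'$ with $q' < q_0^\star$ satisfies $|\alpha_0 q' - a| \geq 1/q > q^{-2m + (2m-1)\eta}$, so case (B) of Lemma \ref{L2.2} can hold only when $q_0^\star \leq q^{(2m-1)\eta}$. Choosing $\eta$ just below $\log q_0^\star / ((2m-1)\log q)$ therefore excludes case (B); case (C) is ruled out by Lemma \ref{L2.3} via hypothesis \eqref{2.4}; and case (A) delivers $|S_q(\U a)| \ll q^{ms - l\eta + \eps}$. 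Passing $\eta$ to the limit yields the first bound.

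For the second bound I combine both lemmas. Fix $\eta^\star$ and $\theta^\star$ satisfying $k\theta^\star = l\eta^\star$ (as in \eqref{3.6}) together with $(2m-1)\eta^\star + (d-1)\theta^\star$ just below $1$; these constraints determine $k\theta^\star = l\eta^\star = ((2m-1)/l + (d-1)/k)^{-1}$. Writing $\tau = \log((q, a_0))/\log q \in [0,1]$, I split into two subcases. If $\tau < 1 - (2m-1)\eta^\star$, then $q_0^\star = q^{1-\tau} > q^{(2m-1)\eta^\star}$, so case (B) of Lemma \ref{L2.2} fails at $\eta^\star$ and case (A) gives $|S_q| \ll q^{ms - l\eta^\star + \eps}$ as in the first bound. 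Otherwise $\tau \geq 1 - (2m-1)\eta^\star > (d-1)\theta^\star > 0$; the coprimality $(\U a, q) = 1$ then forces some $\B j \in J$ with $v_p(a_{\B j}) = 0$, and for this $\B j$ the minimal denominator of an approximation in the sense of case (B) of Lemma \ref{L2.4} equals $(q, a_0) = q^\tau > q^{(d-1)\theta^\star}$, so case (B) of Lemma \ref{L2.4} fails. With case (C) excluded via \eqref{2.10}, case (A) of Lemma \ref{L2.4} yields $|S_q| \ll q^{ms - k\theta^\star + \eps}$. In either subcase the resulting exponent matches the second bound.

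The main technical ingredient is the prime-by-prime valuation dichotomy underpinning the second bound together with its extension from prime powers to general $q$ via multiplicativity. The dichotomy rests on the observation that the constraint $(2m-1)\eta^\star + (d-1)\theta^\star < 1$ combined with the coprimality condition $(\U a, q) = 1$ forces incompatibility between case (B) of Lemma \ref{L2.2} and case (B) of Lemma \ref{L2.4} for prime power $q$. Both the dichotomy and the multiplicativity reduction are essentially routine, but require careful bookkeeping of the denominators and residues.
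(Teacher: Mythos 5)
Your overall strategy — applying the Weyl trichotomies of Lemmata \ref{L2.2} and \ref{L2.4} at the rational point $q^{-1}\U a$ and arguing that the Diophantine information forces one to land in case (A) with a saving determined by the best approximation — is the right idea, and it is also the engine in the paper's proof. The first-bound argument (choose $\eta$ so that $q/(q,a_0)$ sits exactly at the major-arc threshold, then exclude case (B) by a spacing argument) is essentially the paper's argument as well, as is the use of continuity to extend the minor-arcs bound to the boundary. So the high-level plan is sound.

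There is, however, a genuine and fatal gap in the reduction from $S_q(\U a)$ to the exponential sum. You set $P=q$ and claim that $T_q(\U a/q)$ agrees with $S_q(\U a)$ ``up to a bounded combinatorial factor.'' This is false in the relevant sense: $[-q,q]$ contains $2q+1$ integers, which is not a multiple of the period $q$, and the discrepancy between $T_q(\U a/q)$ and $2^{ms}S_q(\U a)$ consists of cross terms supported on a boundary slab of size $\gg q^{ms-1}$. One therefore only gets $|T_q(\U a/q) - 2^{ms}S_q(\U a)|\ll q^{ms-1}$, i.e. $q^{-ms}|S_q(\U a)| \ll q^{-ms}|T_q(\U a/q)| + q^{-1}$. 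The target bound is $(q/(q,a_0))^{-l/(2m-1)+\eps}$, which for $(q,a_0)=1$ is $q^{-l/(2m-1)+\eps}$, and the constraint \eqref{3.1} forces $l>2m+rd\gg 2m-1$; hence $q^{-1}$ dwarfs the desired saving and the estimate you derive on $T_q$ cannot be transferred to $S_q$. The precise error control is exactly what equation \eqref{4.4} provides: $|T_P(\Ua)-q^{-ms}S_q(\U a)v_P(\Ub)|\ll P^{ms-1}q(\cdots)$, whose normalised error is $q/P$. The paper takes $P = Q = q^A$ with $A$ large so that $q/Q = q^{1-A}$ is negligible; this dilation is not an optional embellishment but the essential mechanism that makes the transfer from $T$ to $S_q$ legitimate. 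Without it, no choice of $\eta^\star,\theta^\star$ can rescue the argument, because the loss occurs upstream of the Weyl analysis.

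A secondary remark: the proposed reduction to prime powers via multiplicativity is unnecessary in the paper's proof and introduces its own bookkeeping burden (uniformity of the implied constants in the Weyl lemmas across all prime powers, and the accumulation of $q^\eps$ factors). If you carry out the dilation $Q=q^A$, the paper's argument yields both bounds for general $q$ directly, so the dichotomy on $\tau=\log(q,a_0)/\log q$ and the prime-by-prime valuation analysis can be dispensed with entirely. If you do insist on a prime-power dichotomy, note also that $1-(2m-1)\eta^\star = (d-1)\theta^\star$ exactly under your constraints, so the strict inequality $\tau > (d-1)\theta^\star$ in your second subcase is not guaranteed and needs a perturbation of $\eta^\star$.
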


\begin{proof}
    We imitate the argument of Browning and Heath-Brown \cite[Lemma~8.2]{bhb}. Observe that the statement of the lemma is satisfied when $q=1$, so we can, without loss of generality, suppose that $q >1$. Equally, if $a_0=0$, the first term in the minimum returns the trivial bound, allowing us to assume that $a_0>0$ and therefore $(q,a_0)<q$. Fix $Q = q^A$ for some large $A$ to be determined later. Applying \eqref{4.4} and \eqref{4.8} with $\Ub = \UU 0$ and observing that $v_1(\UU 0) \asymp 1$, it follows that
    \begin{align}\label{4.10}
    		q^{-ms}S_q(\U a) \ll Q^{-ms} |T_Q(q^{-1} \U a)| + q/Q.
    \end{align}
    Fix $\eta$ such that
    \begin{align}\label{4.11}
    		q/(q,a_0) =  Q^{(2m-1)\eta},
    \end{align}
    so that $a_0/q \in \F M_0(Q; \eta)$. Note that by taking $A$ sufficiently large we may ensure $\eta<m/(2m-1)$, so that these major arcs are disjoint. It follows that $a_0/q$ is best approximated by itself. Furthermore, in the $q$-aspect it lies just on the edge of the major arcs. Since by continuity the minor arcs bound applies on the closure of the minor arcs, we have additionally that $|T_Q(q^{-1} \U a)|\ll Q^{ms-l \eta+\eps}$. Solving this for $Q^\eta$ and inserting into \eqref{4.11} yields after rearranging $|T_Q (q^{-1} \U a)| \ll Q^{ms+\eps} \left(q/(q,a_0)\right)^{-\frac{l}{2m-1}}$, and on substituting this into \eqref{4.10}, one sees that
    \begin{align*}
    		q^{-ms}S_q(\U a) \ll Q^{\eps}\left(\frac{q}{(q,a_0)}\right)^{-\frac{l}{2m-1}} + q/Q.
    \end{align*}
    Recalling that $Q=q^A$, it is clear that for $A$ sufficiently large the first term dominates. This establishes the first bound in the lemma.

  	Fix now $\theta$ via
  	\begin{align}\label{4.12}
  			q = Q^{((2m-1)(k/l) + (d-1))\theta},
  	\end{align}
  	so that $q^{-1} \U a \in \F M(Q; \theta)$.
    As before, we are free to take $A$ large enough that the major arcs $\F M(q, \U a)$ are disjoint, and we deduce that in the $q$-aspect, $q^{-1} \U a$ lies on the boundary of $\F M(Q;\theta)$, so the minor arcs estimate of Lemma~\ref{L2.4} (A) still applies and yields $Q^\theta \ll (Q^{-ms+\eps}|T_Q(q^{-1} \U a)|)^{-1/k}$. Together with \eqref{4.12} this produces a non-trivial bound on $T_Q(q^{-1} \U a)$ which in turn may be inserted into \eqref{4.10}, yielding
    \begin{align*}
    		q^{-ms}S_q(\U a) \ll Q^{\eps}q^{-\left(\frac{d-1}{k} + \frac{2m-1}{l}\right)^{-1}} + q/Q.
    \end{align*}
    As before, we see that for $A$ large enough the first term dominates. This establishes the second statement of the lemma.
\end{proof}

Lemma~\ref{L4.1} implies that the singular series may be extended to infinity. In fact, we have
\begin{align*}
	\sum_{q=1}^{\infty} \sum_{\substack{\U a=0 \\ (\U a,q)=1}}^{q-1} q^{-ms} S_q(\U a)
    & \ll \sum_{q=1}^{\infty} q^{r-(1-\lambda)\left(\frac{2m-1}{l} + \frac{d-1}{k} \right)^{-1}+\eps} \sum_{d | q} (q/d)^{1-\lambda\left(\frac{2m-1}{l}\right)^{-1}}
\end{align*}
for each $\lambda \in [0,1]$. This series converges if, for some $\lambda$, one has
\begin{align*}
    \frac{2m-1}{l}< \lambda \quad \text{ and } \quad \frac{(r+1)(2m-1)}{l} + \frac{(r+1)(d-1)}{k} < 1-\lambda,
\end{align*}
and these inequalities can be simultaneously satisfied if and only if
\begin{align}\label{4.13}
    \frac{(2m-1)(r+2)}{l} + \frac{(d-1)(r+1)}{k} <1.
\end{align}

It remains to complete the singular integral
\begin{align}\label{4.14}
    \chi_{\infty}(b, P, R) &= \int_{[-R, R]^{r+1}} v_1(\Ub) e(-b \beta_0/P^{2m}) \D \Ub.
\end{align}
This follows the argument of \cite[Lemma~8.3]{bhb}.

\begin{lem}\label{L4.2}
    We have
    \begin{align*}
        |v_1(\Ub)| \ll \min\left\{1, |\beta_0|^{-\frac{l}{2m-1}+ \eps}, |\bb|^{-\left(\frac{2m-1}{l} + \frac{d-1}{k}\right)^{-1}+\eps}\right\}.
    \end{align*}
\end{lem}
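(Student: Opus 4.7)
The plan is to mimic the argument of Lemma~\ref{L4.1}, using the scaling identity \eqref{4.8} to relate $v_1$ to $v_P$ for an appropriately chosen parameter $P$, and then invoking \eqref{4.4} with $q=1$ and $\U a = \U 0$ to approximate $v_P$ by the exponential sum $T_P$. The trivial bound $|v_1(\Ub)| \ll 1$ is immediate from the integral representation \eqref{4.3}.

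For the bound in $|\beta_0|$: setting $\tilde\Ub = (P^{-d}\bb, P^{-2m}\beta_0)$, equations \eqref{4.8} and \eqref{4.4} combine to give
\begin{align*}
    |v_1(\Ub)| \ll P^{-ms}|T_P(\tilde\Ub)| + P^{-1}(1 + |\bb| + |\beta_0|).
\end{align*}
Choose $P$ so that $|\beta_0| \asymp P^{(2m-1)\eta}$ with $\eta$ slightly less than $m/(2m-1)$. Then $\tilde\alpha_0$ sits precisely on the boundary of $\F M_0(P; \eta)$: the trivial approximation $a_0/q_0 = 0$ just fails to qualify. By continuity the minor-arc estimate of Lemma~\ref{L2.2}(A), whose case (C) is excluded under hypothesis \eqref{2.4} via Lemma~\ref{L2.3}, still applies and yields $|T_P(\tilde\Ub)| \ll P^{ms - l\eta + \eps}$. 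Solving $|\beta_0| \asymp P^{(2m-1)\eta}$ for $P$ converts $P^{-l\eta}$ into $|\beta_0|^{-l/(2m-1)}$, as required.

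For the bound in $|\bb|$: we proceed analogously, but this time scale so that $|\bb| \asymp P^{((d-1) + (2m-1)(k/l))\theta}$ with $k\theta = l\eta$ as in \eqref{3.6}. We may assume $|\beta_0| \leq |\bb|$, since otherwise the $\beta_0$-bound already gives the smaller quantity; this inequality together with the relation between $\eta$ and $\theta$ makes the constraint $|\beta_0| \leq P^{(2m-1)\eta}$ needed to place $\tilde\alpha_0$ into $\F M_0(P; \eta)$ automatic, whilst $\tilde\bb$ lies on the boundary of $\F M(P; \theta)$. The tripartite Weyl estimate of Lemma~\ref{L2.4}(A), whose case (C) is excluded under \eqref{2.10}, then produces $|T_P(\tilde\Ub)| \ll P^{ms - k\theta + \eps}$. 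Eliminating $P$ via the chosen relation yields
\begin{align*}
    P^{-k\theta} = |\bb|^{-((d-1)/k + (2m-1)/l)^{-1}},
\end{align*}
which is the desired bound.

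The main subtlety in each case is ensuring that the error term $P^{-1}(1 + |\bb| + |\beta_0|)$ from \eqref{4.4} is absorbed by the target bound. As in the proof of Lemma~\ref{L4.1}, this is handled by fixing $P$ as a sufficiently large power of $|\beta_0|$ or $|\bb|$ respectively, so that the inverse polynomial factor $P^{-1}$ overwhelms the linear growth of the parenthesised expression. A minor technical point is that we are applying Lemmata~\ref{L2.2} and \ref{L2.4}, whose statements are phrased for $\Ua \in \T^{r+1}$, to the rescaled real tuple $\tilde\Ub$; this is legitimate as both $T_P$ and the membership conditions for the major arcs are defined via continuous functions on $\R^{r+1}$, so the minor-arc estimates pass to the closure of the corresponding sets without incident.
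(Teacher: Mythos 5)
Your overall strategy is the right one and closely matches the paper's: use the scaling identity \eqref{4.8} together with \eqref{4.4} (at $q=1$, $\U a = \UU 0$) to pass from $v_1$ to $T_Q$, position the rescaled point $\Ug$ on the boundary of the relevant set of major arcs, and invoke the minor-arc estimates by continuity. However, there are two substantive gaps in the execution.

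First, for the $|\beta_0|$-bound, the instruction to choose $\eta$ ``slightly less than $m/(2m-1)$'' is not compatible with the error-absorption requirement that you correctly identify at the end. If $\eta$ is near $m/(2m-1)$ and $|\beta_0|\asymp P^{(2m-1)\eta}$, then $P\approx|\beta_0|^{1/m}$, a \emph{small} power of $|\beta_0|$; the error $P^{-1}(1+|\bb|+|\beta_0|)$ is then at least $|\beta_0|^{1-1/m}$, which grows and overwhelms the target bound. In the paper, $Q$ is fixed first as $Q=|\Ub|^A$ (the $A$-th power of $\max\{|\beta_0|,|\bb|\}$) for a large constant $A$, and $\eta$ is then \emph{determined} by $|\beta_0|=Q^{(2m-1)\eta}$, making $\eta$ small; the bound on the major arcs' disjointness ($\eta<m/(2m-1)$) then holds automatically. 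Note also that taking $Q$ to be a power of $|\beta_0|$ alone, rather than of $|\Ub|$, leaves the error term $Q^{-1}|\Ub|$ uncontrolled when $|\bb|$ is much larger than $|\beta_0|$, and (unlike in the $|\bb|$-bound) you cannot reduce this case away, since the $|\beta_0|$-bound is \emph{not} implied by the $|\bb|$-bound.

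Second, for the $|\bb|$-bound the reduction to $|\beta_0|\le|\bb|$ is legitimate (the $|\beta_0|$-bound has the stronger exponent), but the subsequent claim that this inequality ``makes the constraint $|\beta_0|\le P^{(2m-1)\eta}$ automatic'' is false. With the scaling $|\bb|=Q^{((d-1)+(2m-1)(k/l))\theta}$ and $\eta=(k/l)\theta$, one has $Q^{(2m-1)\eta}=Q^{(2m-1)(k/l)\theta}$, which is strictly smaller than $|\bb|$ by a factor $Q^{(d-1)\theta}$; thus $|\beta_0|\le|\bb|$ does not place $\gamma_0$ in $\F M_0(Q;\eta)$. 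The paper avoids this by choosing $\theta$ so that $\max\{Q^{-(d-1)\theta-(2m-1)\eta}|\bb|,\,Q^{-(2m-1)\eta}|\beta_0|\}=1$, which forces $\Ug$ onto the boundary of $\F M(Q;\theta)$ in whichever coordinate is binding, and yields the desired bound on $|T_Q(\Ug)|$ from Lemma~\ref{L2.4}(A) (or, if $|\beta_0|$ is binding, from Lemma~\ref{L2.2}(A) via $l\eta=k\theta$). As written, your argument could also be repaired by a case split: if $|\beta_0|>Q^{(2m-1)\eta}$ then $\gamma_0\notin\F M_0(Q;\eta)$ and Lemma~\ref{L2.2}(A) gives a bound at least as strong; but this step must be made explicit, since the claimed automatic implication does not hold.
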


\begin{proof}
    We start by observing that the bound $|v_1(\Ub)| \ll 1$ is trivial, so in what follows we do not lose any generality by assuming that $\beta_0 \neq 0$ and $\bb \neq \bm 0$.
    Choose $Q=|\Ub|^A$ for some large parameter $A$ to be fixed later, and write $\Ug = (Q^{-d} \bb, Q^{-2m}\beta_0)$. Taking $\U a=\UU 0$ and $q=1$, we have from \eqref{4.4} and \eqref{4.8} that
    \begin{align}\label{4.15}
	   |v_1(\Ub)| = Q^{-ms}|v_Q(\Ug)| \ll Q^{-ms} |T_Q(\Ug)| + Q^{-1}|\Ub|.
    \end{align}
    Determine $\eta$ such that $|\beta_0| = Q^{(2m-1)\eta}$. Observe that for sufficiently large $A$ one has $\eta < m/(2m-1)$, so we can assume that the major arcs are disjoint. Hence $\gamma_0$ is best approximated by $q_0=1$ and $a_0=0$, and thus lies just on the edge of the major arcs $\F M_0(Q;\eta)$. By continuity, the minor arcs estimate extends to the closure of the minor arcs, so we have $Q^{\eta} \ll (Q^{-ms-\eps}|T_Q(\Ug)|)^{-1/l}$. On the other hand, exploiting the major arcs information about $\gamma_0$, we obtain
    \begin{align*}
	   |\beta_0| \ll Q^{(2m-1)\eta} \ll  \left(Q^{-ms-\eps} |T_Q(\Ug)|\right)^{-\frac{2m-1}{l}}.
    \end{align*}
    Solving this for $ |T_Q(\Ug)|$ and inserting into \eqref{4.15} yields
    \begin{align*}
	   |v_1(\Ub)| \ll Q^\eps|\beta_0|^{-\frac{l}{2m-1}} + Q^{-1}|\Ub|.
    \end{align*}
    Recalling that $Q=|\Ub|^{A}$, this yields the first estimate whenever $A$ is large enough.

    For the second estimate, we fix $\theta$ such that, recalling \eqref{3.6}, we have
    \begin{align*}
	   \max \{Q^{-(d-1)\theta - (2m-1)\eta} |\bb|, Q^{-(2m-1)\eta}|\beta_0|\} = 1.
    \end{align*}
    As in the previous lemma, by choosing $A$ large enough, we may assume that the major arcs are disjoint. The unique best approximation to $\Ug$ is therefore given by $\U a = \UU 0$ and $\U q = \UU 1$. In particular, $\Ug$ lies on the boundary of $\F M(Q; \theta)$. Again, by extending the minor arcs estimate from Lemma~\ref{L2.4} (A) to the boundary, we deduce that $Q^{\theta} \ll \left(Q^{-ms-\eps}|T_Q(\Ug)|\right)^{-1/k}$. On the other hand, our choice of $\theta$ implies that
    \begin{align*}
        |\beta_{\B j}|\ll Q^{((2m-1)(k/l)+(d-1))\theta} \ll \left(Q^{-ms-\eps}|T_Q(\Ug)|\right)^{-\left(\frac{2m-1}{l}+\frac{d-1}{k}\right)}
    \end{align*}
    for every $\B j \in J$. This inequality is easily rearranged to yield a bound on $|T_Q(\Ug)|$, and as before, it follows that
    \begin{align*}
	   v_1(\Ub) \ll Q^\eps|\beta_{\B j}|^{-\left(\frac{2m-1}{l}+\frac{d-1}{k}\right)^{-1}} + |\Ub|^{1-A},
    \end{align*}
    which returns the desired bound if $A$ is large enough.
\end{proof}

Now write $\rho_0 = |\beta_0|$ and $\rho = |\bb|$, and note that the set of $\bb$ satisfying $|\bb| = \rho$ has measure $O(\rho^{r-1})$. Thus, the expression from \eqref{4.14} is bounded above by
\begin{align*}
    \chi_{\infty}(b, P, R) &\ll \int_{0}^R \int_{0}^R \min \Big\{1, \rho_0^{-\frac{l}{2m-1}+\eps}, \rho^{-\left(\frac{2m-1}{l} + \frac{d-1}{k} \right)^{-1}+\eps} \Big\} \rho^{r-1}\D \rho \D \rho_0 \\
    & \ll \left(1+\int_{1}^R  \rho_0^{- \lambda \left(\frac{2m-1}{l}\right)^{-1}+\eps}\D \rho_0 \right) \left( 1+\int_{1}^R \rho^{-(1-\lambda)\left(\frac{2m-1}{l} + \frac{d-1}{k} \right)^{-1}+r-1+\eps} \D \rho     \right)
\end{align*}
for any $\lambda \in [0,1]$. As in the situation regarding the singular series, the limit $\chi_{\infty}(b,P) = \lim_{R \to \infty} \chi_{\infty}(b,P,R)$ exists if the inequalities
\begin{align*}
    \frac{2m-1}{l}< \lambda \quad \text{ and } \quad \frac{r(2m-1)}{l} + \frac{r(d-1)}{k} < 1-\lambda
\end{align*}
can simultaneously be satisfied, which is possible if and only if
\begin{align}\label{4.16}
    \frac{(2m-1)(r+1)}{l} + \frac{(d-1)r}{k} <1.
\end{align}
Both \eqref{4.13} and \eqref{4.16} are a consequence of \eqref{3.8}, so on combining our estimates we obtain
\begin{align}\label{4.17}
  N_m(P;b) = P^{ms-rd-2m} \F S_b \chi_{\infty}(b, P) + O(P^{ms-rd-2m-\delta}),
\end{align}
provided that the conditions \eqref{2.4}, \eqref{2.10}, \eqref{3.8} and \eqref{3.10} are all satisfied. In fact, we may restate the case $2m>d$ of Theorem~\ref{mainthm} in a more general fashion.
\begin{thm}\label{T4.1}
    Let $F$, $m$ and $d$ be as in Theorem~\ref{mainthm} with $2m>d$, and suppose that the conditions
    \begin{align*}
        \frac{2^d r (d-1)}{s - \dim \sing F} + \frac{2^{2m-1}(r+2)(2m-1)}{s} &<1 \quad \text{and} \\
        \frac{2^{d-1}rd}{s - \dim \sing F} + \frac{2^{2m-1}(2+rd)(2m-1)}{s} &<1
    \end{align*}
    are both satisfied. Then for some $\delta>0$ one has
    \begin{align*}
        N_m(P;b) = P^{ms-rd-2m}  \chi_{\infty}(b, P) \prod_{p \text{ prime}} \chi_p(b) + O(P^{ms-rd-2m-\delta}),
    \end{align*}
    where the factors are given by
    \begin{align*}
        \chi_{\infty}(b, P) &= \int_{\R^{r+1}} v_1(\Ub) e(-b \beta_0/P^{2m}) \D \Ub
    \end{align*}
    and
    \begin{align*}
        \chi_p(b) = \lim_{i \to \infty} p^{-ims} \sum_{\ol x = 1}^{p^i} \sum_{\U a = 0}^{p^i-1} e( \F F_0(\ol x; p^{-i}\U a) -p^{-i} b a_0).
    \end{align*}
\end{thm}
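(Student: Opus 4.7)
The entire analytic machinery has been assembled in Sections 2--4, and the statement essentially amounts to translating the conditions in Theorem \ref{T4.1} into the parametric hypotheses \eqref{2.4}, \eqref{2.10}, \eqref{3.8} and \eqref{3.10} under which \eqref{4.17} has been established. The plan is to choose $k$ and $l$ to be (slightly less than) the critical values
\begin{align*}
    k = (s-\dim \sing F)/2^{d-1} \quad \text{and} \quad l = s/2^{2m-1},
\end{align*}
for which the strict inequalities \eqref{2.4} and \eqref{2.10} imposed by Lemmas \ref{L2.3} and \ref{L2.4} respectively hold by continuity. With this substitution, condition \eqref{3.8} transforms into the first displayed hypothesis of Theorem \ref{T4.1}, while \eqref{3.10} transforms into the second. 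Since the condition \eqref{3.10} subsumes \eqref{3.1}, all four prerequisites for the pruning argument of Lemmas \ref{L3.1} and \ref{L3.2} are satisfied.

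With these parameter choices in place, the first step is to apply Lemma \ref{L3.2} to confirm that for a suitable $\theta>0$ we have
\begin{align*}
    \int_{\F m(\theta)} T_P(\Ua) e(-\alpha_0 b) \D \Ua \ll P^{ms-rd-2m-\delta},
\end{align*}
so that the main contribution to $N_m(P;b)$ arises from $\Ua \in \F M(\theta) \subseteq \F N(\theta)$. On this set, the approximation \eqref{4.4} together with the volume bound \eqref{4.1} yields the decomposition \eqref{4.7}, which expresses the major arcs contribution in terms of the truncated singular series $\F S_b(P)$ and truncated singular integral $\F J_b(P)$. The next step is to invoke Lemma~\ref{L4.1} to pass from $\F S_b(P)$ to a completed singular series $\F S_b$; here one observes that the conditions of Theorem~\ref{T4.1} imply \eqref{4.13}, which is precisely what is needed for absolute convergence, the tail contribution being $O(P^{-\delta})$ after recalling that $\omega$ can be taken arbitrarily small. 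Similarly, Lemma~\ref{L4.2} combined with the hypotheses implies \eqref{4.16}, ensuring that $\chi_{\infty}(b,P) = \lim_{R \to \infty} \chi_{\infty}(b,P,R)$ exists and the truncation error in \eqref{4.9} is negligible.

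The final task is to identify $\F S_b$ as a product of local factors. This proceeds along classical lines: the Gauss sum $S_q(\U a)$ defined in \eqref{4.2} is multiplicative in $q$ via the Chinese remainder theorem applied to the reparametrisation of $\U a$, and once summation over $\U a$ with $(\U a, q)=1$ has been carried out, one obtains
\begin{align*}
    \F S_b = \prod_{p \text{ prime}} \chi_p(b), \qquad \chi_p(b) = \sum_{i=0}^{\infty} p^{-ims} \sum_{\substack{\U a =0\\p \nmid \U a}}^{p^i-1} S_{p^i}(\U a) e(-a_0 b /p^i),
\end{align*}
and a standard telescoping manipulation rewrites this as the limit of $p$-adic solution densities given in the statement. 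The main obstacle in executing this plan is primarily bookkeeping: one must check that the error term $O(P^{(2r+5)\omega - 1})$ in \eqref{4.7} and the tail estimates from Lemmas \ref{L4.1} and \ref{L4.2} can all be absorbed into $O(P^{-\delta})$ for a common $\delta>0$, which is achieved by choosing $\theta$ (and hence $\omega$) sufficiently small once $k$ and $l$ are fixed.
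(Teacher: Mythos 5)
Your proposal is correct and follows the same route as the paper: equation \eqref{4.17} is the content of the theorem modulo the Euler product factorisation, and the paper likewise observes that the hypotheses of Theorem~\ref{T4.1} are obtained by substituting $k \lesssim (s-\dim\sing F)/2^{d-1}$ and $l \lesssim s/2^{2m-1}$ into \eqref{3.8} and \eqref{3.10} (which encompasses \eqref{3.1}), while the Euler product for $\F S_b$ is standard and referred to Davenport. Your elaboration of how the minor-arc estimate (Lemma~\ref{L3.2}), the major-arc decomposition \eqref{4.7}, and the completion of $\F S_b$ and $\chi_\infty$ via Lemmata~\ref{L4.1}--\ref{L4.2} and conditions \eqref{4.13}, \eqref{4.16} fit together is exactly the intended argument.
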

The only thing that still remains to be shown is that one has indeed an Euler product representation of the singular series as advertised. This is, however, standard and follows from arguments analogous to those given in Chapter 5 of Davenport's book \cite{dav}. We also remark that the second statement of Theorem~\ref{mainthm} follows upon assuming $\dim \sing F = 0$ and observing that under this assumption the hypotheses of Theorem~\ref{T4.1} reduce to
\begin{align*}
    s > \max\{ 2^d r (d-1)+2^{2m-1}(r+2)(2m-1), 2^{d-1}rd +2^{2m-1}(2+rd)(2m-1)\}.
\end{align*}
A modicum of computation confirms that for $2m>d$ the second term dominates.

\section{Weyl differencing in the case $d>2m$}

In our second case, the procedure is structurally very similar to the treatment of the case $d<2m$. The following is a straightforward modification of Lemma~5.3 of \cite{FRF}.
\begin{lem}\label{L5.1}
    Suppose that $k$ satisfies \eqref{2.10} and we have
    \begin{align}\label{5.1}
        0 < \theta < \frac{d}{(d-1)(r+3)}.
    \end{align}
    Then for $\Ua \in \T^{r+1}$ one of the following holds.
    \begin{enumerate}[(A)]
        \item We have $|T_P(\Ua)| \ll P^{ms-k\theta}$, or
        \item there are integers $1 \leqslant \tq \ll P^{2(d-1)\theta}$ and $0 \leqslant a_{\B j} < \tq$ $(\B j \in J)$ such that
            \begin{align*}
                |\tq \alpha_{\B j} - a_{\B j}| \ll P^{-d+3(d-1)\theta}.
            \end{align*}
    \end{enumerate}
\end{lem}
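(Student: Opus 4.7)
My plan is to follow the Weyl differencing template developed by Birch \cite{birch} and adapted to the linear-spaces setting in Lemma~5.3 of \cite{FRF}, with the only significant novelty being the need to accommodate the discriminant contribution $\alpha_0 D(\ol x)$ inside $\F F_0$. Starting from Lemma~\ref{L2.1} with $k = d - 1$ produces the bound
\begin{align*}
|T_P(\Ua)|^{2^{d-1}} \ll P^{((2^{d-1}-1)m - (d-1))s} \sum_{\B h_1, \dots, \B h_{d-1}} \sum_{\ol x} e\left(\Delta_{j_1, \B h_1} \cdots \Delta_{j_{d-1}, \B h_{d-1}} \F F_0(\ol x; \Ua)\right).
\end{align*}

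The key structural observation is that, because $D$ has total degree $2m$ and here we difference $d-1 \geqslant 2m$ times (as $d > 2m$), the contribution of $\alpha_0 D(\ol x)$ after differencing becomes independent of $\ol x$, vanishing altogether when $d - 1 > 2m$. Thus the only $\ol x$-dependence left in the exponent comes from $\F F$, and it is linear in $\ol x$. A direct computation shows that for each fixed tuple $(j_1, \dots, j_{d-1})$, this linear form in the free variable $\B x_{j_d}$ has coefficients that are, up to a combinatorial constant, $\alpha_{\B j}\Phi(\B h_1, \dots, \B h_{d-1}, \cdot)$ with $\B j = (j_1, \dots, j_d) \in J$.

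From this point the analysis proceeds exactly as in Birch's treatment of a single form. Summing over $\ol x$ and invoking the standard bound on linear exponential sums produces a product over $n = 1, \dots, s$ of expressions of the shape $\min\{P, \|c\alpha_{\B j} B_n(\cal H)\|^{-1}\}$, where $B_n$ and $\cal H$ are as defined prior to Lemma~\ref{L2.4}. An application of Lemma~3.1 of \cite{birch}, combined with the exclusion of the geometric alternative via condition (\ref{2.10}), then returns either case~(A) or an individual rational approximation to each $\alpha_{\B j}$ with denominator at most $P^{(d-1)\theta}$ and error bounded by $P^{-d + (d-1)\theta}$.

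The most delicate step is consolidating these individual approximations into a common denominator $\tq \ll P^{2(d-1)\theta}$ valid simultaneously for all $\B j \in J$, which is considerably stronger than the naive $P^{r(d-1)\theta}$ bound obtained by taking least common multiples. I anticipate the improvement --- at the acceptable cost of the slightly weaker error $P^{-d + 3(d-1)\theta}$ --- to arise from a Dirichlet-type simultaneous Diophantine approximation argument applied to the $r$-tuple $(\alpha_{\B j})_{\B j \in J}$, carefully reconciled with the individual major-arcs information. Condition (\ref{5.1}), $\theta < d/((d-1)(r+3))$, is precisely what is required to guarantee compatibility between the two layers of approximation and to keep the resulting common denominator below the claimed threshold; this calibration, together with the bookkeeping of error terms through the combining step, is where I expect the bulk of the technical work to lie.
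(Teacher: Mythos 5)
Your proposal takes the same route as the paper: the paper's proof of Lemma~\ref{L5.1} consists exactly of a citation of Lemma~5.3 of \cite{FRF} plus the observation that, since $\deg D = 2m \leqslant d-1$, the $d-1$ Weyl differencings remove all $\ol x$-dependence from $\alpha_0 D(\ol x)$ (it vanishes when $d-1>2m$ and becomes a harmless constant when $d-1=2m$), and you have both ingredients right.

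One correction to your supplementary sketch of the mechanism in \cite[Lemma~5.3]{FRF}, in case you ever need to carry it through. After applying $\Delta_{j_1,\B h_1}\cdots\Delta_{j_{d-1},\B h_{d-1}}$, the surviving part of $\F F(\ol x;\ba)$ is \emph{not} a linear form in a single free block $\B x_{j_d}$ (no index $j_d$ has been introduced at this stage): all $m$ blocks of $\ol x$ remain free, and up to $\ol x$-independent terms the differenced form is $\sum_{k=1}^m c_k\,\alpha_{(j_1,\dots,j_{d-1},k)}\,\Phi(\B x_k,\cal H)$, so the exponential sum over $\ol x$ factors into $m$ linear sums involving $m$ \emph{distinct} coefficients $\alpha_{\B j}$ that are all controlled by the \emph{same} linear forms $B_n(\cal H)$. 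It is precisely this coupling --- a single differencing direction constraining several $\alpha_{\B j}$ simultaneously, chained across $J$ by varying $(j_1,\dots,j_{d-1})$ --- that forces a single common denominator $\tq\ll P^{2(d-1)\theta}$ with error $P^{-d+3(d-1)\theta}$ and produces constraint \eqref{5.1}. A Dirichlet-style simultaneous approximation of the $r$-tuple $(\alpha_{\B j})_{\B j\in J}$, by contrast, would yield a denominator whose exponent grows proportionally to $r$, far larger than $P^{2(d-1)\theta}$, so pigeonholing alone cannot deliver the stated bound; the leverage comes from the structural link between the forms $\Phi_{\B j}$ through $\Phi$.
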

\begin{proof}
    This follows by the same proof as in \cite[Lemma~5.3]{FRF}. Observe that, since the degree of $D$ is strictly smaller than that of $F$, all the terms involving $D$ disappear in the course of the proof.
\end{proof}

Let now $\theta$ and $\tq$ be fixed, suppose that $\ba$ satisfies the condition of Lemma~\ref{L5.1} (B), and write $\nu = 3(d-1)\theta$. Recall the definition of the discrete differencing operator from \eqref{2.1}, then Lemma~\ref{L2.1} implies that
\begin{align*}
    |T_P(\Ua)|^{2^{2m-2}} \ll P^{(2^{2m-2}m-3m+2)s} \sum_{\B h_1, \dots, \B h_{2m-2}} \sum_{\ol x} e\left(\Delta_{j_1, \B h_1} \cdots \Delta_{j_{2m-2}, \B h_{2m-2}} \F F_0(\ol x; \Ua)\right),
\end{align*}
where the variables $\ol x$ and $\B h_1, \dots, \B h_{2m-2}$ run over boxes contained in $[-P,P]^s$. By Cauchy's inequality we have therefore
\begin{align*}
    |T_P(\Ua)|^{2^{2m-1}} \ll P^{(2^{2m-1}m - 4m + 2)s} \sum_{\B h_1, \dots, \B h_{2m-2}}  \left|\sum_{\ol x} e \left( \Delta_{j_1, \B h_1} \cdots \Delta_{j_{2m-2}, \B h_{2m-2}} \F F_0(\ol x; \Ua)\right) \right|^{2}.
\end{align*}

We abbreviate $\cal H = (\B h_1, \dots, \B h_{2m-2})$. By an argument mirroring the treatment of the case $2m>d$ leading to \eqref{2.8}, we observe that
\begin{align*}
    & \left|\sum_{\ol x} e \left( \Delta_{j_1, \B h_1} \cdots \Delta_{j_{2m-2}, \B h_{2m-2}} \F F_0(\ol x; \Ua)\right) \right|^2 \\
    & \quad \ll P^{ms-(1-\nu)s} \sum_{\ol x} \sum_{|\B w| \ll P^{1-\nu}} e \left( \Delta_{j_1, \B h_1} \cdots \Delta_{j_{2m-2}, \B h_{2m-2}} \Delta_{j_{2m-1}, \tq \B w}  \F F_0(\ol x; \Ua)\right),
\end{align*}
whence we conclude that
\begin{align}\label{5.2}
    |T_P(\Ua)|^{2^{2m-1}} \ll P^{(2^{2m-1}m - 3m + 1 + \nu)s} \sum_{\cal H} \sum_{\B w} \sum_{\ol x} e \left( \Delta_{j_1, \B h_1} \cdots  \Delta_{j_{2m-1}, \tq \B w}  \F F_0(\ol x; \Ua)\right).
\end{align}
Similar to before, we observe that for all $\B j \in J$ and all $1 \leqslant i \leqslant m$, $1 \leqslant n \leqslant s$ one has
\begin{align*}
    \frac{\D}{\D x_{i,n}} e \left( \Delta_{j_1, \B h_1} \cdots \Delta_{j_{2m-2}, \B h_{2m-2}} \Delta_{j_{2m-1}, \tq \B w}  \F F(\ol x; \Ua)\right) \ll \|\tq \alpha_{\B j} \| P^{d-1-\nu} \ll P^{-1}
\end{align*}
from our assumption on $\ba$, so it follows from partial summation that the dominating contribution in \eqref{5.2} stems from $D(\ol x)$.
Recall our notation \eqref{2.2}, then we find
\begin{align}\label{5.3}
    |T_P(\Ua)|^{2^{2m-1}} &\ll P^{(2^{2m-1}m - 3m+1 + \nu)s}  \sum_{\cal H} \sum_{\B w} \left|\sum_{\ol x}  e \left( \alpha_0 \Delta_{j_1, \B h_1} \cdots  \Delta_{j_{2m-1}, \tq \B w}  D(\ol x)\right) \right| \nonumber \\
    & \ll  P^{(2^{2m-1}m - 2m + \nu)s}  \sum_{\cal H} \sum_{\B w} \prod_{n=1}^s \min \left\{P, \|  \tq \alpha_0d_n(\cal H, \B w) \|^{-1} \right\}.
\end{align}
Let
\begin{align*}
  \Upsilon = \sum_{\cal H} \sum_{| \B w| \ll P^{1-\nu}} \prod_{n=1}^s \min \left\{P, \|  \tq \alpha_0 d_n(\cal H, \B w) \|^{-1} \right\},
\end{align*}
and define
\begin{align*}
    N(X, Y; Z) = \card \{  | \B h_1|, \dots, |\B h_{2m-2}| \leqslant X, |\B w| \leqslant Y: \| \tq \alpha_0 d_n(\cal H, \B w) \| < Z \},
\end{align*}
then arguments from the geometry of numbers (see \cite[\S 4]{bhb} or \cite[Lemma~3.4]{FRF2}) show that for every $\eta \in (0, 1-\nu]$ one has
\begin{align}\label{5.4}
    \Upsilon &\ll P^{s + \eps} N(P, P^{1-\nu}; P^{-1}) \nonumber \\
    & \ll P^{s + \eps} P^{(2m-1)(1-\eta)s - \nu s} N(P^{\eta}, P^{\eta}; P^{-2m + (2m-1)\eta + \nu}).
\end{align}
Suppose now that $| T_P(\Ua)| \gg P^{ms- l \eta}$, then substituting \eqref{5.4} into \eqref{5.3} yields
\begin{align*}
    N(P^{\eta}, P^{\eta}; P^{-2m + (2m-1)\eta + \nu})\gg P^{(2m-1)s\eta - 2^{2m-1}l \eta - \eps},
\end{align*}
and as before, the argument of the proof of Lemma~3.4 of \cite{FRF} leads us to the following Weyl type dissection.

\begin{lem}\label{L5.2}
    Suppose that $\tq$ and $\theta$ are as in Lemma~\ref{L5.1}, and let $l$ and $\eta$ be fixed positive parameters satisfying $0 < \eta \le 1-3(d-1)\theta$. Then for every $\Ua \in \T^{r+1}$ one of the following holds.
    \begin{enumerate}[(A)]
        \item We have $|T_P(\Ua)| \ll P^{ms-l\eta}$, or
        \item there are integers $1 \leqslant q_0 \ll P^{(2m-1) \eta}$ and $1 \leqslant a_0 < q_0 \tq$ satisfying
        \begin{align*}
            |\alpha_0 \tq q_0 - a_0| \ll P^{-2m + (2m-1)\eta + 3(d-1)\theta},
        \end{align*}
        or
        \item the number of integral $\B h_1, \dots, \B h_{2m-1} \in  [-P^{\eta}, P^{\eta}]^s$ satisfying
        \begin{align*}
            d_n(\B h_1, \dots, \B h_{2m-1}) = 0 \qquad (1 \leqslant n \leqslant s )
        \end{align*}
        is asymptotically larger than $(P^{\eta})^{(2m-1)s - 2^{2m-1}l - \eps}$.
    \end{enumerate}
\end{lem}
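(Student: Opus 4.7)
The bulk of the analysis has already been carried out in the derivation of \eqref{5.3} together with the geometry-of-numbers estimate \eqref{5.4}. My plan is to assume failure of case (A), feed the resulting lower bound into those estimates, and then apply a standard Diophantine approximation argument to decide between cases (B) and (C).

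More concretely, I would suppose that $|T_P(\Ua)| \gg P^{ms-l\eta}$. Raising this to the $2^{2m-1}$-th power and combining with \eqref{5.3}--\eqref{5.4} yields, after rearrangement,
\begin{align*}
    N(P^{\eta}, P^{\eta}; P^{-2m + (2m-1)\eta + \nu}) \gg (P^\eta)^{(2m-1)s - 2^{2m-1}l - \eps},
\end{align*}
so there are many tuples $(\cal H, \B w)$ of height $\ll P^\eta$ at which $\|\tq \alpha_0 d_n(\cal H, \B w)\| \ll P^{-2m+(2m-1)\eta+\nu}$ for every $1 \le n \le s$. Next I would apply the Davenport-type pigeonhole argument from the proof of Lemma~3.4 in \cite{FRF} (itself a variant of Lemma~3.1 of \cite{birch}), treating $\tq \alpha_0$ as a single real parameter. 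This produces a dichotomy: either one obtains $\gg (P^\eta)^{(2m-1)s - 2^{2m-1}l - \eps}$ integer tuples at which each $d_n$ vanishes identically, which is case (C); or there is an integer $1 \le q_0 \ll P^{(2m-1)\eta}$ with $\|q_0 \tq \alpha_0\| \ll P^{-2m+(2m-1)\eta+\nu}$, whence case (B) follows on setting $a_0$ to be the nearest integer and recalling $\nu = 3(d-1)\theta$.

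The only mild subtlety is that the denominator in case (B) is $\tq q_0$ rather than $q_0$: the multiplier $\tq$ originating in Lemma~\ref{L5.1} is carried through the van der Corput step leading to \eqref{5.3}, and the stated range $1 \le a_0 < q_0 \tq$ in the lemma is precisely what is needed to absorb this. Beyond that, the argument should mirror \cite[Lemma 5.3]{FRF} essentially verbatim, so I do not expect any substantive new obstacle.
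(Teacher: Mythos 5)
Your proposal matches the paper's argument: the paper derives exactly the lower bound $N(P^{\eta}, P^{\eta}; P^{-2m + (2m-1)\eta + \nu})\gg (P^\eta)^{(2m-1)s - 2^{2m-1}l - \eps}$ from the negation of case (A) combined with \eqref{5.3}--\eqref{5.4}, and then invokes the Davenport--Birch dichotomy from Lemma~3.4 of \cite{FRF} to split into cases (B) and (C), with $\tq$ carried through so that the approximation in (B) has denominator $\tq q_0$. Your account is a correct and faithful reconstruction of that same route.
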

Lemma~\ref{L2.3} above allows us to exclude the third case by demanding that \eqref{2.4} holds.
As before, under certain conditions we may combine Lemmata \ref{L5.1} and \ref{L5.2} to show that on a large set of minor arcs the contribution is smaller than the expected main term.

\section{The minor arcs in the case $d>2m$}

Throughout this section we make the assumptions \eqref{2.4} and \eqref{2.10}. The treatment of the minor arcs is similar to that of \S 3. However, without further measures the constraint imposed upon $\theta$ in Lemma~\ref{L5.1} would lead to unnecessarily large bounds. Fortunately, this can be avoided by pruning instead a different set of major arcs that can be defined for any positive $\theta \le 1$. We record here Lemma~3.5 of \cite{FRF}, which serves as starting point for our first pruning step.

\begin{lem}\label{L6.1}
    Let $\theta \in (0,1]$ and $k>0$ be parameters, where $k$ satisfies \eqref{2.10}. Then one of the following is true.
    \begin{enumerate}[(A)]
        \item We have $|T_P(\Ua)| \ll P^{ms-k\theta}$, or
        \item for each $\B j \in J$ there are integers $0 \leqslant a_{\B j} < q_{\B j} \ll P^{(d-1)\theta}$ satisfying $|\alpha_{\B j} q_{\B j}-a_{\B j}| \ll P^{-d + (d-1)\theta}$.
    \end{enumerate}
\end{lem}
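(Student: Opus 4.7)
The statement is recognised as Lemma~3.5 of \cite{FRF}, and the plan is to reproduce the Weyl differencing argument used there, verifying that the extra term $\alpha_0 D(\ol x)$ in $\F F_0$ does not interfere. The first move is to invoke Lemma~\ref{L2.1} with $k = d-1$, giving
\begin{align*}
|T_P(\Ua)|^{2^{d-1}} \ll P^{((2^{d-1}-1)m - (d-1))s} \sum_{\B h_1, \dots, \B h_{d-1}} \sum_{\ol x} e \left( \Delta_{j_1, \B h_1} \cdots \Delta_{j_{d-1}, \B h_{d-1}} \F F_0(\ol x; \Ua)\right).
\end{align*}
Since each differencing reduces the polynomial degree by one, the contribution from $F$ becomes linear in $\ol x$, which is precisely the situation treated by Birch.

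The key structural point is that, because $\deg D = 2m$ and we are in the regime $d > 2m$, one has $d-1 \ge 2m$; thus after $d-1$ differencings the contribution of $D$ to the argument of the exponential is independent of $\ol x$. Indeed, $D$ is quadratic in each vector variable $\B x_i$, so if any coordinate $j_l$ appears more than twice among $(j_1,\dots,j_{d-1})$ the differenced $D$ vanishes identically, while in the edge case $d = 2m+1$ with each coordinate appearing exactly twice, the result is a constant depending only on $\B h_1,\dots,\B h_{d-1}$. Either way, the $\alpha_0 D$ term factors out of the inner sum over $\ol x$ and contributes at most a harmless modulus-one phase.

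What remains is the classical Birch situation: the inner sum over $\ol x$ decouples into a product of $s$ linear exponential sums whose phases involve $\alpha_{\B j} B_n(\B h_1,\dots,\B h_{d-1})$, where the forms $B_n$ are the $(d-1)$-linear forms obtained by polarising $\Phi$ as in the proof of Lemma~\ref{L2.4}. The standard estimate $\min\{P,\|\cdot\|^{-1}\}$ followed by a geometry-of-numbers argument (as in \cite[\S 4]{bhb} or \cite[Lemma~3.4]{FRF2}) then yields a Birch-style trichotomy: either we have the Weyl bound of Case~(A), or we obtain simultaneous rational approximations to every $\alpha_{\B j}$ with a common denominator $\tq \ll P^{(d-1)\theta}$ (this is Case~(B) since all the $q_{\B j}$ can be merged into one denominator), or the number of integer tuples $\cal H \in [-P^\theta,P^\theta]^{(d-1)s}$ with $B_n(\cal H)=0$ for all $1 \le n \le s$ is asymptotically larger than $(P^\theta)^{(d-1)s - 2^{d-1}k - \eps}$.

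The principal (and only real) obstacle is to exclude this third alternative. As in \cite[Lemma~3.5]{FRF}, this is achieved by interpreting the vanishing locus of the $B_n$ geometrically: it defines a variety whose dimension is bounded by $(d-2)s + \dim\sing F$, so the third case cannot occur under the hypothesis \eqref{2.10} that $s - \dim \sing F > 2^{d-1}k$. Combining the surviving two alternatives, and merging the individual $q_{\B j}$ from Case~(B) into a single denominator $\tq$, yields the dichotomy claimed in the statement, with no restriction whatsoever imposed on $\alpha_0$.
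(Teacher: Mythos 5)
Your core argument reconstructs what the paper compresses into a one-line citation, and it is sound: the paper establishes Lemma~\ref{L6.1} by invoking Lemma~3.5 of \cite{FRF} and remarking that, since $\deg D = 2m < d = \deg F$, the term $\alpha_0 D$ drops out of the differenced exponential sum. You identify the mechanism correctly: after $d-1$ applications of $\Delta$ via Lemma~\ref{L2.1} the form $F$ becomes linear in $\ol x$, while $D$, having degree $2m \leqslant d-1$, is reduced to something independent of $\ol x$, so $\alpha_0 D$ factors out as a unit-modulus phase that disappears on taking absolute values, and one is left with Birch's trichotomy in the multilinear forms $B_n$, the third case being excluded by hypothesis \eqref{2.10}.

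The one muddled point is your handling of Case~(B). You first assert that the geometry-of-numbers step produces a \emph{common} denominator $\tq$ for all $\alpha_{\B j}$, and then close by saying the individual $q_{\B j}$ must be merged into a single $\tq$ to reach the claimed statement. Both remarks are off: Birch's argument delivers inhomogeneous approximations with separate denominators $q_{\B j} \ll P^{(d-1)\theta}$, one for each $\B j \in J$, and that is precisely what Case~(B) of Lemma~\ref{L6.1} asserts. No merging is needed or performed here; the passage to a common denominator $\tq$ (with the attendant weakening to $\tq \ll P^{2(d-1)\theta}$) is the content of the separate Lemma~\ref{L5.1}, which you appear to have conflated with the present statement. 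Your underlying chain of reasoning nonetheless produces the individual denominators, so the proof is substantively correct, but the closing paragraph misdescribes the conclusion you have actually reached.
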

As in the case of Lemma~\ref{L5.1}, the contribution of $D$ disappears in the course of the proof as $D$ is of strictly smaller degree than $F$.

Denote by $\F M_d(P;\theta)$ the set of $\ba \in \T^r$ with the property that one can find entrywise coprime vectors $0 \leqslant \B a < \B q \leqslant P^{(d-1)\theta}$ satisfying $|\alpha_{\B j} q_{\B j}-a_{\B j}| \leqslant P^{-d+(d-1)\theta}$ for each $\B j \in J$. Write further $\F m_d(P; \theta) = \T^{r} \setminus \F M_d(P; \theta)$, then Lemma~\ref{L6.1} shows that we have $|T_P(\Ua)| \ll P^{ms-k\theta+\eps}$ for all $\ba \in \F m_d(P;\theta)$. As usual, we will suppress the parameter $P$ in most cases.

\begin{lem} \label{L6.2}
    Suppose that $k>0$ and $\theta_* \in (0,1]$ satisfy
    \begin{align}\label{6.1}
        k>dr+2m
    \end{align}
    and
    \begin{align}\label{6.2}
        (k-2r(d-1))\theta_*>2m.
    \end{align}
    Then there exists a parameter $\delta>0$ such that uniformly for all $\alpha_0 \in \T$ one has
    \begin{align*}
        \int_{\F m_d(\theta_*)} |T_P(\Ua)| \D \ba \ll P^{ms-rd-2m-\delta}.
    \end{align*}
\end{lem}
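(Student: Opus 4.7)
The plan is to mirror the proof of Lemma~\ref{L3.1}, with $\ba$, $\theta$, $k$, and $d-1$ playing the roles of $\alpha_0$, $\eta$, $l$, and $2m-1$, respectively. No analytic input beyond Lemma~\ref{L6.1} is required, and the uniformity in $\alpha_0$ is automatic because neither Lemma~\ref{L6.1} nor the volume estimate recorded below involves $\alpha_0$ at all.

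I would first note the standard volume bound
\[
\vol \F M_d(\theta) \ll \prod_{\B j \in J}\left(\sum_{q_{\B j}=1}^{P^{(d-1)\theta}} \sum_{\substack{a_{\B j}=0 \\ (a_{\B j},q_{\B j})=1}}^{q_{\B j}-1} \frac{P^{-d+(d-1)\theta}}{q_{\B j}}\right) \ll P^{-rd + 2r(d-1)\theta},
\]
and then select, via hypothesis~\eqref{6.2}, a finite decreasing sequence $1 = \theta_0 > \theta_1 > \dots > \theta_{T_1} = \theta_*$ with $T_1 = O(1)$ satisfying
\[
k(\theta_{i-1} - \theta_i) < (k - 2r(d-1))\theta_* - 2m \qquad (1 \le i \le T_1);
\]
this is possible because the right-hand side is strictly positive. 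Applying Lemma~\ref{L6.1}(A) at $\theta = 1$ yields $|T_P(\Ua)| \ll P^{ms-k}$ on $\F m_d(1)$, so together with~\eqref{6.1} the initial piece satisfies
\[
\int_{\F m_d(1)} |T_P(\Ua)| \D \ba \ll P^{ms-k} \ll P^{ms - rd - 2m - \delta}
\]
for some $\delta > 0$.

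For the residual region $\F m_d(\theta_*) \setminus \F m_d(1) = \bigsqcup_{i=1}^{T_1} \bigl(\F M_d(\theta_{i-1}) \setminus \F M_d(\theta_i)\bigr)$, each piece lies in $\F m_d(\theta_i)$, so the volume estimate combined with Lemma~\ref{L6.1}(A) at $\theta = \theta_i$ gives a contribution of size $P^{ms - rd - 2m + E_i + \eps}$, where
\[
E_i = 2m + k(\theta_{i-1} - \theta_i) - (k - 2r(d-1))\theta_{i-1}.
\]
Using $\theta_{i-1} \ge \theta_*$ together with our calibration of $(\theta_i)$ shows that $E_i < 0$ by a definite margin, so each of the $O(1)$ pieces contributes at most $P^{ms-rd-2m-\delta}$ once $\eps$ is taken sufficiently small. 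The only substantive issue is the careful calibration of the telescoping sequence against the slack provided by~\eqref{6.2}; the discriminant form $D$ creates no obstacle at this step, as it has already been absorbed into Lemma~\ref{L6.1}.
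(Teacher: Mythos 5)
Your proof follows the paper's argument essentially verbatim: same volume bound for $\F M_d(\theta)$, same calibrated sequence $1 = \theta_0 > \dots > \theta_{T_1} = \theta_*$ satisfying $k(\theta_{i-1} - \theta_i) < (k - 2r(d-1))\theta_* - 2m$, same initial bound from \eqref{6.1} on $\F m_d(1)$, and the same annular decomposition paired with Lemma~\ref{L6.1}(A). Your explicit computation of $E_i$ and the reduction to $\theta_{i-1} \ge \theta_*$ makes precise a step the paper leaves implicit, but it is the same argument.
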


\begin{proof}
    Fix a sequence $(\theta_i)$ with $T_3=O(1)$ terms satisfying
    \begin{align*}
        1= \theta_0 > \theta_1 > \dots > \theta_{T_3} = \theta_*
    \end{align*}
    and having the property
    \begin{align}\label{6.3}
        k(\theta_{i-1}- \theta_i) < (k-2r(d-1))\theta_* - 2m \qquad (1 \leqslant i \leqslant T_3).
    \end{align}
    From \eqref{6.1} we infer that there exists a $\delta>0$ such that
    \begin{align*}
        \int_{\F m_d(\theta_0)} |T_P(\Ua)| \D \ba \ll P^{ms-rd-2m-\delta}.
    \end{align*}
    Furthermore, one computes
    \begin{align}\label{6.4}
        \vol \F M_d(\theta) \ll P^{-rd + 2r(d-1)\theta}
    \end{align}
    (see  e.g. equation (4.2) in \cite{FRF}), so on the difference set one has
    \begin{align*}
        \int_{\F m_d(\theta_*) \setminus \F m_d(\theta_0)} |T_P(\Ua)| \D \ba & \ll \max_{1 \leqslant i \leqslant T_3} \vol \F M_d(\theta_{i-1}) \sup_{\ba \in \F m_d(\theta_i)} |T_P(\Ua)| \\
        & \ll \max_{1 \leqslant i \leqslant T_3} P^{-rd + 2r(d-1)\theta_{i-1}}P^{ms-k \theta_i+\eps}
    \end{align*}
    by \eqref{6.4} and Lemma~\ref{L6.1} (A), and \eqref{6.3} ensures that the exponent is smaller than $ms-rd-2m$ whenever $\eps$ is small enough.
\end{proof}

This set $\F M_d(\theta)$ of major arcs has inhomogeneous denominators, so in order to be able to define major arcs for $\alpha_0$ as well we first need to define a second set of homogenised major arcs. Suppose that $\theta^{\dagger}$ is small enough so that \eqref{5.1} holds, then we define $\F M_d^{\dagger}(\tilde q, \B a)$ to be the set of all $\ba \in \T^r$ satisfying $|\alpha_{\B j} \tilde q - a_{\B j}| \leqslant P^{-d+3(d-1)\theta}$, and
\begin{align*}
  \F M_d^{\dagger}(P; \theta) = \bigcup_{\tilde q=1}^{P^{2(d-1)\theta}} \bigcup_{\substack{\B a = 0 \\ (\B a, \tilde q)=1}}^{\tilde q-1} \F M_d^{\dagger}(\tilde q, \B a).
\end{align*}
Again, we let $\F m_d^{\dagger}(\theta) = \T^r \setminus \F M_d^{\dagger}(\theta)$ and note that this dissection into major and minor arcs respects the case distinction of Lemma~\ref{L5.1}. Lemma~5.3 of \cite{FRF} shows that $\F M_d(\theta) \subseteq \F M_d^\dagger(\theta)$ for all $\theta$ satisfying \eqref{5.1}.

Define now $\F M(P; \theta, \eta)$ as the set of those $\Ua \in \T^{r+1}$ having $\ba = \tq^{-1} \B a + \bb \in \F M_d^\dagger(P; \theta)$ and for which there are coprime integers $q_0 \leqslant P^{(2m-1)\eta}$ and $0 \leqslant a_0 < \tq q_0$ satisfying
\begin{align*}
    |\alpha_0 \tq q_0 - a_0| \leqslant P^{-2m + 3(d-1)\theta + (2m-1)\eta},
\end{align*}
where, as customary, the parameter $P$ will usually be suppressed. Then
\begin{align}\label{6.5}
    \vol \F M(\theta, \eta) & \ll \sum_{\tilde q=1}^{P^{2(d-1)\theta}} \left(\prod_{\B j \in J} \sum_{a_{\B j}=0}^{\tilde q -1} \frac{P^{-d+3(d-1)\theta}}{\tilde q} \right) \sum_{q_0=1}^{P^{(2m-1)\eta}} \sum_{a_0=0}^{q_0 \tilde q -1} \frac{P^{-2m + 3(d-1)\theta + (2m-1)\eta}}{q_0 \tilde q}  \nonumber \\
    &\ll P^{-rd -2m + (3r+5)(d-1)\theta + 2(2m-1)\eta}.
\end{align}
Write further $\F m(P; \theta, \eta) = \T^{r+1} \setminus \F M(P; \theta, \eta)$, and observe that, again, one has $|T_P(\Ua)| \ll P^{ms-l \eta+\eps}$ whenever $\Ua \in \F m(P; \theta, \eta)$ with $\ba \in \F M_d^\dagger(P;\theta)$. As in the treatment of \S 3, it is convenient to make the assumption \eqref{3.6}, so we will suppress the parameter $\theta$ in what follows.

\begin{lem}\label{L6.3}
    Suppose that $k$ and $l$ are positive numbers satisfying
    \begin{align}\label{6.6}
        \frac{(3r+5)(d-1)}{k} + \frac{2(2m-1)}{l} < 1.
    \end{align}
  Let further $\theta_*$ be the value of $\theta$ obtained in Lemma~\ref{L6.2}, and suppose that $\theta_*$ satisfies \eqref{5.1} as well as the inequalities
    \begin{align}\label{6.7}
        (k-(3r+2)(d-1))\theta_*>2m
    \end{align}
    and
  \begin{align}\label{6.8}
    (3(d-1) + (k/l)) \theta_* \leqslant 1.
  \end{align}
  Then for any $\eta \in (0,(k/l) \theta_*]$ there exists a $\delta>0$ such that
  \begin{align*}
    \int_{\F m(\eta)} |T_P(\Ua)| \D \Ua \ll P^{ms-rd-2m-\delta}.
  \end{align*}
\end{lem}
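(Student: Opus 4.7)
The plan is to mirror the pruning argument of Lemma~\ref{L3.2} from Section~3, with the roles of $\alpha_0$ and $\ba$ effectively reversed: here the outer pruning is in $\ba$ (via Lemma~\ref{L6.2}) and the inner pruning is in $\alpha_0$ (via Lemma~\ref{L5.2}). Throughout I adopt the convention $k\theta = l\eta$ of \eqref{3.6}, so that to every sequence of $\eta_i$'s there corresponds a sequence $\theta_i = (l/k)\eta_i$, and the minor-arcs bounds of both Lemma~\ref{L5.2} and Lemma~\ref{L6.1} become the common estimate $|T_P(\Ua)| \ll P^{ms - l\eta_i + \eps}$.

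Set $\eta_* = (k/l)\theta_*$. I first handle the base case $\int_{\F m(\eta_*)} |T_P(\Ua)|\,\D \Ua$ by decomposing according to whether $\ba \in \F M_d^{\dagger}(\theta_*)$ or not. Since $\F M_d(\theta_*) \subseteq \F M_d^{\dagger}(\theta_*)$ one has $\F m_d^{\dagger}(\theta_*) \subseteq \F m_d(\theta_*)$; thus on $\ba \in \F m_d^{\dagger}(\theta_*)$ an application of Lemma~\ref{L6.2} uniformly in $\alpha_0$ followed by integration over $\T$ yields a saving of $P^{-\delta}$ over the target, noting that \eqref{6.7} implies both \eqref{6.1} and \eqref{6.2}. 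On $\ba \in \F M_d^{\dagger}(\theta_*)$ with $\alpha_0$ off the prescribed level-$(\theta_*,\eta_*)$ approximation set, Lemma~\ref{L5.2} applies --- its compatibility condition $\eta_* \le 1 - 3(d-1)\theta_*$ is exactly \eqref{6.8} --- giving $|T_P(\Ua)| \ll P^{ms - l\eta_* + \eps}$. A direct computation shows $\vol \F M_d^{\dagger}(\theta_*) \ll P^{-rd + (3r+2)(d-1)\theta_*}$, and combining these with $l\eta_* = k\theta_*$ produces the exponent $ms - rd + ((3r+2)(d-1) - k)\theta_* + \eps$, which is $\le ms - rd - 2m - \delta$ by hypothesis \eqref{6.7}.

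For the extension to smaller $\eta$, I choose a finite decreasing sequence $\eta_* = \eta_0 > \eta_1 > \cdots > \eta_T = \eta$ with $T = O(1)$ terms satisfying
\begin{align*}
    l \eta_i > A \eta_{i-1} + \delta \qquad (1 \leqslant i \leqslant T), \quad \text{where } A = (3r+5)(d-1)(l/k) + 2(2m-1).
\end{align*}
Hypothesis \eqref{6.6} is precisely the statement $l > A$, which is what makes such a sequence available, with $\delta$ of size $O((l-A)\eta)$; here the dependence of $\delta$ on $\eta$ is harmless since the lemma only asserts existence of some $\delta>0$. Writing $\F m(\eta) \setminus \F m(\eta_*) = \bigsqcup_{i=1}^T (\F M(\eta_{i-1}) \setminus \F M(\eta_i))$ and using the volume bound $\vol \F M(\eta_{i-1}) \ll P^{-rd - 2m + A\eta_{i-1}}$ from \eqref{6.5} (with $\theta_{i-1} = (l/k)\eta_{i-1}$) together with a sup bound of $P^{ms - l\eta_i + \eps}$ on the $i$-th shell, the contribution at step $i$ is $\ll P^{ms - rd - 2m + (A\eta_{i-1} - l\eta_i) + \eps}$, which by construction of the sequence is $\ll P^{ms - rd - 2m - \delta}$.

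The main obstacle is establishing the sup bound $|T_P(\Ua)| \ll P^{ms - l\eta_i + \eps}$ uniformly on $\F M(\eta_{i-1}) \cap \F m(\eta_i)$. On this intersection $\ba$ lies in $\F M_d^{\dagger}(\theta_{i-1})$ but not necessarily in the smaller $\F M_d^{\dagger}(\theta_i)$, so a further dichotomy is needed. If $\ba \in \F M_d^{\dagger}(\theta_{i-1}) \setminus \F M_d^{\dagger}(\theta_i) \subseteq \F m_d^{\dagger}(\theta_i) \subseteq \F m_d(\theta_i)$, then Lemma~\ref{L6.1} gives $|T_P(\Ua)| \ll P^{ms - k\theta_i + \eps}$; otherwise $\ba \in \F M_d^{\dagger}(\theta_i)$ and the failure of $\Ua \in \F M(\eta_i)$ forces $\alpha_0$ off its level-$(\theta_i,\eta_i)$ major arcs, so Lemma~\ref{L5.2} provides $|T_P(\Ua)| \ll P^{ms - l\eta_i + \eps}$. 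The convention $k\theta_i = l\eta_i$ ensures that both branches of the dichotomy produce the same sup bound, after which the geometric-series bookkeeping proceeds exactly as in the proof of Lemma~\ref{L3.2}.
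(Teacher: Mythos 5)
Your proof is correct and follows essentially the same route as the paper's: split off the base case at $\eta_* = (k/l)\theta_*$ by decomposing according to whether $\ba \in \F M_d^\dagger(\theta_*)$ (using Lemma~\ref{L6.2} via $\F m_d^\dagger \subseteq \F m_d$ on one side and the volume bound $\vol \F M_d^\dagger(\theta_*) \ll P^{-rd+(3r+2)(d-1)\theta_*}$ together with Lemma~\ref{L5.2}(A) on the other), then telescope from $\eta_*$ down to $\eta$ over a bounded sequence of shells $\F M(\eta_{i-1})\setminus\F M(\eta_i)$, using \eqref{6.5} and the convention $k\theta=l\eta$, with a step-size condition equivalent to the paper's \eqref{6.9}. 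One small but genuine improvement over the paper's exposition: you explicitly resolve the subtlety that on $\F M(\eta_{i-1})\cap\F m(\eta_i)$ the vector $\ba$ need not lie in $\F M_d^\dagger(\theta_i)$, so the sup bound $\ll P^{ms-l\eta_i+\eps}$ requires a dichotomy between Lemma~\ref{L6.1} (when $\ba\in\F m_d^\dagger(\theta_i)\subseteq\F m_d(\theta_i)$) and Lemma~\ref{L5.2} (when $\ba\in\F M_d^\dagger(\theta_i)$), both of which give the same exponent under the convention \eqref{3.6}. The paper leaves this implicit in the remark preceding the lemma; spelling it out is a useful clarification, not a deviation.
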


\begin{proof}
    The contribution from $\F m(\eta_*)$ can be computed as
    \begin{align*}
         \int_{\F m(\eta_*)} |T_P(\Ua)| \D \Ua        &\ll \int_\T\int_{\F m_d^\dagger(\theta_*)} |T_P(\Ua)| \D \ba \D \alpha_0 + \vol \F M_d^\dagger(\theta_*) \sup_{\Ua \in \F m(\eta_*)} |T_P(\Ua)| \\
        &\ll P^{ms-rd-2m-\delta} + P^{-rd+(3r+2)(d-1)\theta_*}P^{ms-l\eta_*+\eps},
    \end{align*}
    where we used the fact that $\F m_d^\dagger(\theta) \subseteq \F m_d(\theta)$, and the exponent is smaller than $ms-rd-2m$ by \eqref{3.6} and \eqref{6.7}.

    Now in order to bound the contribution from $\F m(\eta) \setminus \F m(\eta_*)$ we fix a sequence $(\eta_i)$ with $T_4 = O(1)$ terms satisfying
    \begin{align*}
      (k/l)\theta_* = \eta_* =\eta_0 > \eta_1 > \dots> \eta_{T_4} = \eta
    \end{align*}
    and
    \begin{align}\label{6.9}
      l\big(\eta_{i-1}- \eta_i) < (l-(3r+5)(d-1)(l/k) - 2(2m-1)\big)\eta.
    \end{align}
    This is possible by \eqref{6.6}, and \eqref{6.8} ensures via \eqref{3.6} that $\eta_* \leqslant 1-3(d-1)\theta_*$.
    Then, arguing as before, we arrive at the bound
    \begin{align*}
        \int_{\F m(\eta) \setminus \F m(\eta_*)} |T_P(\Ua)| \D \Ua &\ll \max_{1 \leqslant i \leqslant T_4} \vol \F M(\eta_{i-1})\sup_{\Ua \in \F m(\eta_i)} |T_P(\Ua)|\\
	   & \ll \max_{1 \leqslant i \leqslant T_4}P^{-rd-2m+((3r+5)(d-1)(l/k) + 2(2m-1)) \eta_{i-1}} P^{ms-l\eta_i+\eps},
    \end{align*}
    where we used \eqref{6.5} and Lemma~\ref{L5.2} (A). Again, by \eqref{6.9}, the exponent is smaller than $ms-rd-2m$ whenever $\eps$ is sufficiently small.
\end{proof}

A straightforward computation shows that the conditions \eqref{6.7} and \eqref{6.8} can be simultaneously satisfied only if
\begin{align}\label{6.10}
    \frac{2m}{l} + \frac{(6m+3r+2)(d-1)}{k} < 1.
\end{align}
Similarly, the conditions \eqref{5.1} and \eqref{6.7} are compatible if
\begin{align}\label{6.11}
    k>(3r+2)(d-1) + \frac{2m(d-1)(r+3)}{d},
\end{align}
and these constraints imply \eqref{6.1} and \eqref{6.2}.
Hence it follows from combining Lemmata \ref{L6.2} and \ref{L6.3} that for every $\eta>0$ there is a $\delta>0$ such that
\begin{align*}
    \int_{\F m(\eta)} |T_P(\Ua)| \D \Ua \ll P^{ms-rd-2m-\delta},
\end{align*}
provided the conditions \eqref{6.6}, \eqref{6.10} and \eqref{6.11} are satisfied.

\section{Major arcs analysis in the case $d>2m$}

This is very similar to the treatment in \S 4. Write $\omega = ((2m-1)+ 3(d-1)(l/k)) \eta$, then after setting $q = \lcm(\tq, q_0)$, we denote by $\F N(q, \U a)$ the set of all $\Ua \in \T^{r+1}$ satisfying
\begin{align*}
    |\alpha_{\B j} - a_{\B j}/q| &\leqslant P^{-d+\omega} \qquad (\B j \in J), & |\alpha_{0} - a_{0}/q| &\leqslant P^{-2m+\omega},
\end{align*}
and
\begin{align*}
    \F N(\eta) = \bigcup_{q=1}^{P^{\omega}} \bigcup_{\substack{\U a=0\\(\U a,q)=1} }^{q-1} \F N(q, \U a).
\end{align*}
As in \S 4, this definition implies that $\F M(\eta) \subseteq \F N(\eta)$, and the volume of these extended major arcs is still estimated by \eqref{4.1} with $\omega$ given as above. Recall the definitions \eqref{4.2}, \eqref{4.3}, \eqref{4.5} and \eqref{4.6}, then \eqref{4.4} and \eqref{4.7} continue to hold with adapted parameters and the error is acceptable if $\eta$ has been chosen small enough.

As in \S 4, we show that the singular integral and the singular series can be extended to infinity. This analysis is in fact very similar to that of the case $d<2m$.
\begin{lem}\label{L7.1}
    We have the bound
    \begin{align*}
        |q^{-ms} S_q(\U a)| \ll q^{\eps}\min \left\{ \left(\frac{q}{(q, \B a)}\right)^{-\frac{k}{d-1}}, q^{-\left(\frac{3(d-1)}{k} + \frac{2m-1}{l}\right)^{-1}} \right\}.
    \end{align*}
\end{lem}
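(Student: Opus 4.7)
The plan is to mirror the proof of Lemma~\ref{L4.1}, replacing its inputs (Lemmata~\ref{L2.2} and~\ref{L2.4}) by their $d>2m$ counterparts: Lemma~\ref{L6.1}, which provides a single-parameter dissection in $\ba$ at rate $k\theta$, and Lemma~\ref{L5.2}, which provides the combined two-parameter dissection valid on $\F M(Q;\eta)$ at rate $l\eta = k\theta$ via \eqref{3.6}. The two terms in the minimum correspond exactly to invoking these two estimates in turn.

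The common starting point is obtained by applying \eqref{4.4} with $\Ub = \UU 0$, together with the scaling in \eqref{4.8} and the fact that $v_1(\UU 0) \asymp 1$. Taking $Q = q^A$ for a large parameter $A$ to be specified, one obtains
\begin{align*}
    q^{-ms} S_q(\U a) \ll Q^{-ms}|T_Q(q^{-1} \U a)| + q/Q,
\end{align*}
exactly as in \eqref{4.10}. As usual, we may assume $\B a \neq \B 0$ (else $q/(q,\B a)=1$ and the first bound is trivial).

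For the first bound, fix $\theta$ by the relation $q/(q,\B a) = Q^{(d-1)\theta}$, so that $q^{-1}\B a \in \F M_d(Q;\theta)$. Taking $A$ sufficiently large makes these major arcs disjoint, so $q^{-1}\B a$ is uniquely its own best rational approximation of this kind and therefore sits on the edge of $\F M_d(Q;\theta)$ in the $q$-aspect. Since the minor arcs bound of Lemma~\ref{L6.1}~(A) extends to the closure by continuity, we obtain $|T_Q(q^{-1}\U a)| \ll Q^{ms-k\theta+\eps}$. Dividing by $Q^{ms}$ and substituting gives $Q^{-ms}|T_Q(q^{-1}\U a)| \ll Q^\eps (q/(q,\B a))^{-k/(d-1)}$, and for $A$ large the $q/Q$ term is dominated, producing the first term in the minimum.

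For the second bound, recall \eqref{3.6} and set $\eta = (k/l)\theta$. Fix $\theta$ by
\begin{align*}
    q = Q^{(2m-1)\eta + 3(d-1)\theta} = Q^{((2m-1)(k/l) + 3(d-1))\theta},
\end{align*}
so that $q^{-1}\U a$ lies on the boundary of $\F M(Q;\eta)$ (for $A$ large these combined major arcs are again disjoint). Lemma~\ref{L5.2}~(A), extended by continuity to the boundary, then yields $|T_Q(q^{-1}\U a)| \ll Q^{ms - l\eta + \eps} = Q^{ms - k\theta + \eps}$. Re-expressing $Q^{-k\theta}$ in terms of $q$ via
\begin{align*}
    Q^{-k\theta} = q^{-k/((2m-1)(k/l) + 3(d-1))} = q^{-\left(\frac{3(d-1)}{k} + \frac{2m-1}{l}\right)^{-1}}
\end{align*}
and again absorbing $q/Q$ for $A$ large enough delivers the second term in the minimum.

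The main obstacle is purely bookkeeping: keeping the relation $k\theta = l\eta$ consistent with the shape of $\omega$ appearing in the definition of $\F N(\eta)$ at the start of \S 7, and then cleanly inverting the exponent identity to recover the desired power of $q$. No new analytic idea is needed beyond the Browning--Heath-Brown saturation trick already used in Lemma~\ref{L4.1}.
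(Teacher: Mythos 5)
Your proposal is correct and follows essentially the same route as the paper's own proof of Lemma~\ref{L7.1}: both derive the starting inequality from \eqref{4.4}, \eqref{4.8}, and $v_1(\underline{\bm 0}) \asymp 1$, then place $q^{-1}\underline{\mathbf{a}}$ on the boundary of $\mathfrak{M}_d(Q;\theta)$ (for the first bound, via Lemma~\ref{L6.1}) and on the boundary of $\mathfrak{M}(Q;\eta)$ (for the second, via Lemma~\ref{L5.2}), using disjointness of the major arcs for large $A$ and continuity to apply the minor-arcs estimates on the closures. The paper parametrises the second step directly in $\eta$ via $q = Q^{(3(d-1)(l/k)+(2m-1))\eta}$, while you parametrise in $\theta$ with $\eta = (k/l)\theta$, but these are identical under \eqref{3.6}.
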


\begin{proof}
    We imitate the proof of Lemma~\ref{L4.1}. Since the lemma is trivially true for $q=1$, we may suppose without loss of generality that $q>1$, and by a similar argument the claim is trivially true if $\B a = \bm 0$, allowing us to assume that $(q, \B a)<q$. Let $Q = q^A$ for some large $A$ to be determined later, and fix $\theta$ such that
    \begin{align}\label{7.1}
        \frac{q}{(q, \B a)} = Q^{(d-1)\theta},
    \end{align}
    so that $\B a/q \in \F M_d(Q; \theta)$. Note that by taking $A$ sufficiently large we may ensure that $\theta<d/(2(d-1))$. Under this assumption, the major arcs are disjoint, so $\B a/q$ is best approximated by itself. Furthermore, in the $q$-aspect it lies just on the edge of the major arcs. As in \S 4, the minor arcs bound continues to apply on the closure of the minor arcs, so together with \eqref{7.1} we find
    $$
        |T_Q(q^{-1} \U a)| \ll Q^{ms+\eps}\left(\frac{q}{(q,\B a )}\right)^{-\frac{k}{d-1}},
    $$
    and on substituting this into \eqref{4.10}, we see that
    \begin{align*}
        |q^{-ms}S_q(\U a)| \ll Q^\eps \left(\frac{q}{(q, \B a)}\right)^{-\frac{k}{d-1}} + q/Q.
    \end{align*}
    Recalling that $Q=q^A$, it is clear that for $A$ sufficiently large the first term dominates. This establishes the first bound in the lemma.

  	Fix now $\eta$ via
  	\begin{align}\label{7.2}
  		q = Q^{(3(d-1)(l/k) + (2m-1))\eta},
  	\end{align}
    so that $q^{-1} \U a \in \F M(Q; \eta)$. By choosing $A$ large enough, we may assume that the major arcs are disjoint. Hence $q^{-1} \U a$ is best approximated by itself, and in the $q$-aspect it lies on the boundary of the major arcs. Using the corresponding minor arcs bound
    $$
        |T_Q(q^{-1} \U a)| \ll Q^{ms+\eps}q^{-\left(\frac{3(d-1)}{k} + \frac{2m-1}{l}\right)^{-1}}
    $$
    together with \eqref{7.2} within \eqref{4.10} yields
    \begin{align*}
    		|q^{-ms}S_q(\U a)| \ll Q^{\eps}q^{-\left(\frac{3(d-1)}{k} + \frac{2m-1}{l}\right)^{-1}} + q/Q,
    \end{align*}
    and we see that for $A$ large enough the first term dominates. This establishes the second statement of the lemma.
\end{proof}

We may now extend $\F S_b(P)$ to infinity. In fact, we have
\begin{align*}
    \sum_{q=1}^{\infty} \sum_{\substack{\U a=0 \\ (\U a,q)=1}}^{q-1} q^{-ms} S_q(\U a) & \ll \sum_{q=1}^{\infty} q^{1-(1-\lambda)\left(\frac{2m-1}{l} + \frac{3(d-1)}{k} \right)^{-1}+\eps} \sum_{d | q} (q/d)^{r-\lambda\left(\frac{d-1}{k}\right)^{-1}}
\end{align*}
for each $\lambda \in [0,1]$. This series converges if, for some $\lambda$, one has
\begin{align*}
    \frac{r(d-1)}{k}< \lambda \quad \text{ and } \quad \frac{2(2m-1)}{l} + \frac{6(d-1)}{k} < 1-\lambda,
\end{align*}
and these inequalities can be simultaneously satisfied if and only if
\begin{align}\label{7.3}
    \frac{2(2m-1)}{l} + \frac{(r+6)(d-1)}{k} <1.
\end{align}

For the treatment of the singular integral we remark that the equations \eqref{4.8} and \eqref{4.9} remain valid with adapted parameters, so it remains to establish an analogous version of Lemma~\ref{L4.2}.

\begin{lem}\label{L7.2}
    We have
	\begin{align*}
        |v_1(\Ub)|\ll \min \big\{1, |\bb|^{-\frac{k}{d-1}+\eps}, |\beta_0|^{-\left( \frac{3(d-1)}{k} + \frac{2m-1}{l}\right)^{-1}+\eps} \big\}.
	\end{align*}
\end{lem}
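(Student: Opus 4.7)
The statement is the $d > 2m$ analogue of Lemma~\ref{L4.2}, with the roles of $\ba$ and $\alpha_0$ swapped: the ``single-variable'' bound in $|\bb|$ will come from Lemma~\ref{L6.1}, while the ``combined'' bound in $|\beta_0|$ will come from Lemma~\ref{L5.2}. My plan is to imitate the proof of Lemma~\ref{L4.2} step by step. The trivial bound $|v_1(\Ub)| \ll 1$ is immediate from \eqref{4.3}, so one may assume $\bb \neq \bm 0$ and $\beta_0 \neq 0$. Set $Q = |\Ub|^A$ for a large parameter $A$ to be fixed later and write $\Ug = (Q^{-d}\bb, Q^{-2m}\beta_0)$; combining \eqref{4.4} (applied at $q=1$, $\U a = \bm 0$) with \eqref{4.8} yields
\begin{align*}
|v_1(\Ub)| = Q^{-ms}|v_Q(\Ug)| \ll Q^{-ms}|T_Q(\Ug)| + Q^{-1}|\Ub|.
\end{align*}
For each of the two remaining bounds I would fix the scale parameters so that $\Ug$ sits on the boundary of the relevant major arcs; by continuity the minor arcs estimate then extends there, and the combined information yields the required bound on $|T_Q(\Ug)|$, while the error term $Q^{-1}|\Ub|$ is absorbed by taking $A$ large.

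For the $|\bb|$ bound I would define $\theta$ by $|\bb| = Q^{(d-1)\theta}$. Since $|\bb| \leqslant |\Ub| = Q^{1/A}$, one has $\theta \leqslant 1/((d-1)A)$, so for $A$ sufficiently large the arcs in $\F M_d(Q;\theta)$ are pairwise disjoint. The unique best approximation to the $\ba$-part of $\Ug$ is then $\U a = \bm 0$, $\U q = \bm 1$, so $\ba = Q^{-d}\bb$ sits exactly on the edge of $\F M_d(Q;\theta)$. By continuity the minor arcs bound of Lemma~\ref{L6.1} (A) extends to the boundary, yielding $|T_Q(\Ug)| \ll Q^{ms-k\theta+\eps}$. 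Inserting this into the approximation above and recalling $Q^\theta = |\bb|^{1/(d-1)}$ gives $|v_1(\Ub)| \ll |\bb|^{-k/(d-1)+\eps} + |\Ub|^{1-A}$, and the first term dominates for $A$ large.

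For the $|\beta_0|$ bound I would invoke the convention \eqref{3.6} and fix $\theta$ (with $\eta = (k/l)\theta$) by the condition
\begin{align*}
\max\{Q^{-3(d-1)\theta}|\bb|,\; Q^{-(3(d-1) + (2m-1)(k/l))\theta}|\beta_0|\} = 1,
\end{align*}
which places $\Ug$ on the boundary of $\F M(Q;\eta)$. For $A$ large both $\theta$ and $\eta$ are small enough to satisfy \eqref{5.1} and to ensure that the major arcs are pairwise disjoint, so $\Ug$ is best approximated by $q = \tq = q_0 = 1$ and $\U a = \bm 0$. Continuity of Lemma~\ref{L5.2} (A) then gives $|T_Q(\Ug)| \ll Q^{ms-l\eta+\eps}$, i.e.\ $Q^\eta \ll (Q^{-ms-\eps}|T_Q(\Ug)|)^{-1/l}$. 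On the other hand, the choice of $\theta$ implies $|\beta_0| \ll Q^{(3(d-1)(l/k) + (2m-1))\eta}$, and combining these estimates produces
\begin{align*}
|T_Q(\Ug)| \ll Q^{ms+\eps}|\beta_0|^{-\left(\frac{3(d-1)}{k} + \frac{2m-1}{l}\right)^{-1}}.
\end{align*}
Substituting into the approximation for $|v_1(\Ub)|$ and taking $A$ large concludes the proof. The only delicate point is the set-up of the max equation, needed because Lemma~\ref{L5.2} mixes information on $|\bb|$ and $|\beta_0|$; this device is exactly the one used in Lemma~\ref{L4.2} and presents no fresh obstacle.
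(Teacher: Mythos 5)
Your proposal is correct and follows essentially the same route as the paper: the trivial bound, the substitution $Q = |\Ub|^A$ and $\Ug = (Q^{-d}\bb, Q^{-2m}\beta_0)$, placing $\Ug$ on the boundary of the appropriate major arcs ($\F M_d(Q;\theta)$ for the $|\bb|$ bound via Lemma~\ref{L6.1}, and $\F M(Q;\eta)$ via the max-condition for the $|\beta_0|$ bound via Lemma~\ref{L5.2}), and letting continuity of the minor arcs estimate supply the bound on $|T_Q(\Ug)|$ before absorbing the $Q^{-1}|\Ub|$ error by taking $A$ large. This matches the paper's argument step for step.
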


\begin{proof}
    We imitate again our treatment of the case $2m>d$. The bound $|v_1(\Ub)|\ll1$ is trivial, so we may assume that $|\beta_0|>1$, and also that $|\bb| >1$. Choose $Q=|\Ub|^A$ for some large parameter $A$ to be fixed later, and write $\Ug = (Q^{-d} \bb, Q^{-2m} \beta_0)$, then equations \eqref{4.4} and \eqref{4.8} with $\U a = \UU 0$ and $q = 1$ imply that relation \eqref{4.15} holds true.
    Determine $\theta$ such that $|\bb| = Q^{(d-1)\theta}$. Since for $\theta < d / (2(d-1))$ the major arcs $\F M_d(Q; \theta)$ are disjoint, by choosing $A$ sufficiently large we may ensure that this approximation is the only one, so $\bb$ lies just on the edge of the major arcs. As before, the minor arcs estimate extends to the closure, so by Lemma~\ref{L6.1} (A) we  have $Q^{\theta} \ll (Q^{-ms-\eps} |T_Q(\Ug)|)^{-1/k}$.
    On the other hand, our choice of $\theta$ implies
	\begin{align*}
        |\beta_{\B j}|  \ll Q^{(d-1)\theta} \ll (Q^{-ms-\eps} |T_Q(\Ug)|)^{- \frac{d-1}{k}} \qquad (\B j\in J),
	\end{align*}
    which gives $|T_Q(\Ug)| \ll Q^{ms+\eps} |\bb|^{- \frac{k}{d-1}}$. Inserting this into \eqref{4.15} and recalling $Q = |\Ub|^A$ yields
	\begin{align*}
	   	|v_1(\Ub)| \ll Q^\eps|\bb|^{- \frac{k}{d-1}} + |\Ub|^{1-A},
	\end{align*}
	which is acceptable if $A$ is large enough.
		
	On the other hand, if we fix $\eta$ such that, with \eqref{3.6}, we have
	\begin{align*}
        \max \left\{Q^{-3(d-1)\theta}|\bb|, Q^{-(2m-1)\eta - 3(d-1)\theta} |\beta_0| \right\}=1,
	\end{align*}
    then by choosing $A$ sufficiently large, we can force $\eta$ to be small enough that the major arcs $\F M(q, \U a)$ are disjoint, so $\Ug$ lies on the edge of $\F M(Q; \eta)$, and it follows from extending the minor arcs estimate to the boundary that $Q^{\eta} \ll (Q^{-ms-\eps} |T_Q(\Ug)|)^{-1/l}$. As before, we also have the major arcs information
	\begin{align*}
        |\beta_0| \ll Q^{((2m-1) + 3(d-1)(l/k))\eta} =  \left( Q^{-ms-\eps}|T_Q(\Ug)|\right)^{- \left( \frac{2m-1}{l} + \frac{3(d-1)}{k}\right)}.
	\end{align*}
    This produces an upper bound for $|T_Q(\Ug)|$ which can be substituted into \eqref{4.15} and then yields the desired result, provided that $A$ has been chosen large enough.
\end{proof}

The analysis of the major arcs is now swiftly completed. Again, we define $\chi_{\infty}(b, P, R)$ as in \eqref{4.14} and see that
\begin{align*}
    \chi_{\infty}(b,P,R) & \ll \left(1+\int_1^R  \rho^{-\lambda \left( \frac{d-1}{k}\right)^{-1} +r-1+\eps} \D \rho\right) \left(1+ \int_1^R \rho_0^{-(1-\lambda)\left( \frac{3(d-1)}{k} + \frac{2m-1}{l}\right)^{-1}+\eps}\D \rho_0 \right)
\end{align*}
for any $\lambda \in [0,1]$. The limit $\chi_{\infty}(b, P) = \lim_{R \to \infty }\chi_{\infty}(b,P,R)$ exists if $\lambda$ can be chosen to satisfy
\begin{align*}
    \frac{r(d-1)}{k}< \lambda \quad \text{ and } \quad \frac{(2m-1)}{l} + \frac{3(d-1)}{k} < 1-\lambda,
\end{align*}
 which is possible precisely if
\begin{align}\label{7.4}
    \frac{(2m-1)}{l} + \frac{(r+3)(d-1)}{k} <1.
\end{align}
Observe that both \eqref{7.3} and \eqref{7.4} are implied in \eqref{6.6}. Combining all estimates, we may thus conclude that the asymptotic formula given in \eqref{4.17} holds for $d>2m$, provided the conditions  \eqref{2.4}, \eqref{2.10}, \eqref{6.6}, \eqref{6.10} and \eqref{6.11} are all satisfied.  Again, we may formulate a theorem that is more general than what has been stated in the introduction.

\begin{thm}\label{T7.1}
	Suppose that the conditions
	\begin{align*}
		\frac{2^{d-1}(6m+3r+2)(d-1)}{s-\dim \sing F} + \frac{2^{2m}m}{s} &<1, \\
    	\frac{2^{d-1}(3r+5)(d-1)}{s-\dim \sing F} + \frac{2^{2m}(2m-1)}{s} &<1
	\end{align*}
    and
	\begin{align*}
		s - \dim \sing F > 2^{d-1}(d-1)(3r+2+2m(r+3)/d)
    \end{align*}
	are all satisfied. Then for some $\delta>0$ one has
	\begin{align*}
        N_m(P;b) = P^{ms-rd-2m}  \chi_{\infty}(b, P) \prod_{p \text{ prime}} \chi_p(b) + O(P^{ms-rd-2m-\delta}),
    \end{align*}
    where the factors $\chi_p(b)$ and $\chi_{\infty}(b,P)$ are as in Theorem~\ref{T4.1}.
\end{thm}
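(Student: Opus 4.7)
The plan is to observe that essentially all of the analytic work has been completed in the preceding two sections: the discussion immediately preceding Theorem~\ref{T7.1} shows that the asymptotic formula \eqref{4.17} holds whenever the five conditions \eqref{2.4}, \eqref{2.10}, \eqref{6.6}, \eqref{6.10} and \eqref{6.11} can be realised simultaneously for some positive parameters $k$ and $l$. The only task remaining is therefore to exhibit a choice of $(k,l)$ depending only on $s$, $d$, $m$ and $\dim \sing F$ that converts the hypotheses of Theorem~\ref{T7.1} into a verification of these five inequalities.

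My approach would be to saturate the two basic inequalities \eqref{2.4} and \eqref{2.10}, since these are the only constraints in which $k$ and $l$ appear individually. Introducing a small auxiliary parameter $\eps' > 0$, I would set
\begin{align*}
    l = \frac{s - \eps'}{2^{2m-1}} \qquad \text{and} \qquad k = \frac{s - \dim \sing F - \eps'}{2^{d-1}},
\end{align*}
so that both \eqref{2.4} and \eqref{2.10} hold by construction. A routine substitution then transforms \eqref{6.10} into
\begin{align*}
    \frac{2^{2m} m}{s - \eps'} + \frac{2^{d-1}(6m + 3r + 2)(d-1)}{s - \dim \sing F - \eps'} < 1,
\end{align*}
which in the limit $\eps' \to 0$ is precisely the first hypothesis of Theorem~\ref{T7.1}; the same substitution converts \eqref{6.6} into the second hypothesis and \eqref{6.11} into the third. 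Since the hypotheses of Theorem~\ref{T7.1} are strict inequalities, by continuity I can fix $\eps'$ small enough that each of \eqref{6.6}, \eqref{6.10} and \eqref{6.11} holds with room to spare. Compatibility of the threshold $\theta_*$ of Lemma~\ref{L6.3} with \eqref{5.1} is then guaranteed by the remarks following \eqref{6.11}, and \eqref{4.17} delivers the asymptotic formula claimed in Theorem~\ref{T7.1}.

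The last remaining task is to identify the arithmetic factor in \eqref{4.17} with the Euler product $\prod_p \chi_p(b)$. This is standard: multiplicativity of $S_q(\U a)$ in $q$, together with the absolute convergence of the singular series furnished by Lemma~\ref{L7.1} (which in turn is secured by condition \eqref{6.6}), permits the factorisation in the manner of \cite[Chapter 5]{dav}. I anticipate no serious obstacle in the proof as a whole; the only delicacy is the verification that the saturation prescribed above is optimal. This is transparent because $k$ and $l$ appear as single factors in \eqref{2.4} and \eqref{2.10} respectively, so any admissible alternative choice of $(k,l)$ is dominated by the extremal one and would yield hypotheses on $s$ at least as stringent as those stated.
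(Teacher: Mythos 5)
Your proposal is correct and follows precisely the same route the paper takes implicitly: saturate the bounds \eqref{2.4} and \eqref{2.10} by choosing $l$ and $k$ just below $s/2^{2m-1}$ and $(s-\dim\sing F)/2^{d-1}$ respectively, substitute into \eqref{6.6}, \eqref{6.10} and \eqref{6.11}, and observe that the resulting inequalities are exactly the hypotheses of Theorem~\ref{T7.1}. Your remark on the optimality of this saturation (monotonicity of \eqref{6.6}, \eqref{6.10}, \eqref{6.11} in $k$ and $l$, combined with the upper bounds those two constraints impose) is a welcome explicit justification of a step the paper leaves to the reader, and the concluding appeal to multiplicativity of $S_q(\U a)$ for the Euler product is standard and matches the paper's citation of \cite[Chapter 5]{dav}.
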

Again, the only thing that remains to show is the Euler product representation of the singular series, which follows in a straightforward manner from standard references such as \cite[Chapter 5]{dav}. Remark that for smooth forms $F$ the conditions of Theorem~\ref{T7.1} simplify to
\begin{align*}
		s > \max\{
		&2^{d-1}(6m+3r+2)(d-1) + 2^{2m}m,\\
		&2^{d-1}(3r+5)(d-1) + 2^{2m}(2m-1),\\
        &2^{d-1}(d-1)(3r+2+2m(r+3)/d)
		\}
\end{align*}
and a modicum of computation confirms that for $d>2m$ the first term dominates the second one.

\end{document}